 \newtheorem{theorem}{Theorem}[section]
 \newtheorem{thmA}{Theorem}
 \newtheorem{corollary}[theorem]{Corollary}
 \newtheorem{lemma}[theorem]{Lemma}
 \newtheorem{proposition}[theorem]{Proposition}
 \theoremstyle{definition}
 \newtheorem{definition}[theorem]{Definition}
\newtheorem{notation}[theorem]{Notation}
 \theoremstyle{remark}
 \newtheorem{remark}[theorem]{Remark}
 \newtheorem{conjecture}[theorem]{Conjecture}
 \numberwithin{equation}{subsection}
 \newtheorem{ack}{Acknowledgment}
\newcommand{\BB}{\text{$\mathcal{B}$}}
\newcommand{\MM}{\text{$\mathcal{M}$}}
\newcommand{\FF}{\text{$\mathcal{F}$}}
\newcommand{\GG}{\text{$\mathcal{G}$}}
\newcommand{\LL}{\text{$\mathcal{L}$}}
\newcommand{\NN}{\text{$\mathcal{N}$}}
\newcommand{\LB}{\text{$\Lambda$}}
\newcommand{\Diff}{\operatorname{Diff}}
\newcommand{\Homeo}{\operatorname{Homeo}}
        \newcommand{\field}[1]{\text{$\mathbb{#1}$}}
        \newcommand{\N}{\field{N}}
        \newcommand{\Z}{\field{Z}}
        \newcommand{\R}{\field{R}}
        \newcommand{\C}{\field{C}}
\newdimen\theight
\def\TeXref#1{%
             \leavevmode\vadjust{\setbox0=\hbox{{\tt
                     \quad\quad  {\small \textrm #1}}}%
             \theight=\ht0
             \advance\theight by \lineskip
             \kern -\theight \vbox to
             \theight{\rightline{\rlap{\box0}}%
             \vss}%
             }}%
\begin{document}

\title{Exotic non-leaves with infinitely many ends}

\author{Carlos Meni\~no Cot\'on}
\author{Paul A. Schweitzer, S.J.}

\address{Departamento de Análise, Instituto de Matemática e Estatística\\
Universidade Federal Fluminense\\
Rua Professor Marcos Waldemar de Freitas Reis S/N, Camous Gragoat\'a,
Niteroi, Rio de Janeiro 21941-916, Brazil.}

\email{carlos.meninho@gmail.com}

\address{Departamento de Matematica\\
         Pontificia universidade Cat\'olica do Rio de Janeiro\\
         Marques de S\~ao Vicente 225, Gavea, Rio de Janeiro 22453-900, Brazil. }

\email{paul37sj@gmail.com}

\begin{abstract}
We show that any simply connected topological closed $4$--manifold punctured along any compact, totally disconnected tame subset $\Lambda$ admits a continuum of smoothings which are not diffeomorphic to any leaf of a $C^{1,0}$ codimension one foliation on a compact manifold. This includes the remarkable case of $S^4$ punctured along a tame Cantor set. This is the lowest reasonable regularity for this realization problem. These results come from a new criterion for nonleaves in $C^{1,0}$ regularity. We also include a new criterion for nonleaves in the $C^2$-category.

Some of our smooth nonleaves are ``exotic'', i.e., homeomorphic but not diffeomorphic to leaves of codimension one foliations on a compact manifold, being the first examples in this class.
\end{abstract}

 \maketitle

\section*{Introduction}
An interesting question in foliation theory, which can be traced back to J. Sondow  \cite{Sondow}, is to understand what kind of manifolds can be realized as leaves of foliations on compact manifolds. An open manifold which is realizable as a leaf must satisfy some restrictions. Since the ambient manifold is compact, an open manifold has to accumulate somewhere, and this induces recurrence and ``some periodicity'' on its ends.

In \cite{Menino-Schweitzer} the authors exhibit several families of smooth $4$--manifolds with finitely many ends which are homeomorphic to leaves but not diffeomorphic to any leaf of any $C^2$ codimension $1$ foliation on a compact manifold. These examples include topologically simple manifolds such as some exotic $\R^4$'s and $S^3\times\R$'s. In the present paper we give examples of smooth $4$-manifolds that cannot be diffeomorphic to leaves of $C^{1,0}$ codimension one foliations. Since there is a great difference between codimension one foliations of classes $C^{1,0}$ and $C^2$, this is a significant improvement over the results of \cite{Menino-Schweitzer}.

As in \cite{Menino-Schweitzer}, we use the specific and stunning characteristics of $4$--manifolds, the unique dimension in which there are exotic euclidean spaces. These exotic structures enjoy some sort of non--periodicity (for instance, for some of them it is known that they cannot be covering spaces \cite{Taylor1}); this led to a conjecture in the 1990's, partially resolved in \cite{Menino-Schweitzer}, that they cannot be realized as leaves of foliations on compact manifolds.

\medskip

\noindent {\bf Historical background.} In the 1980's it was shown by J. Cantwell and L. Conlon \cite{Cantwell-Conlon} that every open surface is homeomorphic (in fact, diffeomorphic) to a leaf of a foliation on every closed $3$--manifold, and the first examples of topological nonleaves were due to E. Ghys~\cite{Ghys1} and  T. Inaba, T. Nishimori, M. Takamura, N. Tsuchiya \cite{JAP}; these are open $3$--manifolds with  highly non--periodic fundamental groups which cannot be homeomorphic to leaves in a codimension one foliation in a compact manifold (\cite{Ghys1} for $C^0$ foliations and \cite{JAP} for those of class $C^2$). Other examples of Riemannian nonleaves were found by O. Attie and S. Hurder~\cite{Attie-Hurder} and others  (\cite{Phillips-Sullivan}, \cite{Janusz}, \cite{Zeghib}, \cite{Schweitzer}, \cite{Schweitzer1}).
On the other hand, every Riemannian manifold with bounded geometry is isometric (in particular, diffeomorphic) to a leaf of a compact lamination \cite{Alvarez-Barral} and any smooth manifold admits a Riemannian metric with bounded geometry \cite{Greene}, so the realization problem is only of interest in the category of foliated manifolds (or at least finite dimensional laminations).

The first main result of this paper is the following.

\begin{thmA} Let $M$ be a closed simply connected topological $4$--manifold. Let $\Lambda\subset M$ be any infinite, totally disconnected, compact tame subspace. Then there exists an uncountable family of smoothings of $M\setminus \Lambda$, such that the elements of this family are pairwise non--diffeomorphic and they are not diffeomorphic to any leaf of any $C^{1,0}$ codimension one foliation on a compact manifold.
\label{Thm1}\end{thmA}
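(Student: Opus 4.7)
The plan is to build the smoothings by implanting distinct exotic $\R^4$'s at different ends of $M\setminus\Lambda$, and then to rule them out as leaves using a new $C^{1,0}$ recurrence criterion for leaves. Since $\Lambda$ is tame, totally disconnected and compact, the space of ends of $M\setminus\Lambda$ is canonically homeomorphic to $\Lambda$ (each $p\in\Lambda$ contributes one end, with the local structure around $p$ providing a neighborhood basis for that end). By Freedman--Quinn smoothing theory, $M\setminus\Lambda$ admits at least one smooth structure; fix one as a baseline. To generate uncountably many more, I would fix an uncountable family $\{R_\alpha\}_{\alpha\in A}$ of exotic $\R^4$'s with pairwise distinct Taylor-type invariants (e.g.\ the invariants used in \cite{Menino-Schweitzer}), and for each assignment $f\colon\Lambda\to A$ in a suitable class, modify the baseline smoothing near the end corresponding to $p\in\Lambda$ by end-summing with $R_{f(p)}$. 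Tameness of $\Lambda$ makes this localization at each end well defined.

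To show pairwise non-diffeomorphism, observe that any diffeomorphism between two such smoothings induces a homeomorphism of the end space and preserves the local exotic type at each end: from it one extracts $\phi\in\Homeo(\Lambda)$ together with diffeomorphisms $R_{f(p)}\cong R_{g(\phi(p))}$ for all $p$. By choosing the $f$'s so that the resulting germ-at-end assignments lie in pairwise distinct orbits under $\Homeo(\Lambda)$, a cardinality argument yields a continuum of pairwise non-diffeomorphic smoothings. The nonleaf conclusion then follows from the new criterion promised in the introduction: compactness of the ambient foliated manifold forces plaque chains in a leaf $L$ to recur into a fixed transversal, and continuity of the $C^{1,0}$ holonomy provides enough control to transport the germ of the smooth structure between recurring plaques, so that the ends of $L$ must exhibit a form of \emph{diffeomorphic recurrence}. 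Our smoothings, designed to have a continuum of pairwise distinct exotic end-germs, flatly violate this recurrence.

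The main obstacle is the $C^{1,0}$ criterion itself. In the $C^2$ setting of \cite{Menino-Schweitzer} one could use Kopell-type rigidity, Denjoy's theorem or Sacksteder-type arguments to transfer smooth structure along holonomy; in $C^{1,0}$ all of these break down (Denjoy counterexamples exist and Kopell's lemma fails). The substitute must use only that holonomy derivatives are continuous, probably combined with a Poincar\'e-recurrence-type argument for the transverse action of the holonomy pseudogroup on a compact transversal, together with enough control on the $C^1$ distortion of plaque-chain holonomies to guarantee that recurring ends of a leaf are actually diffeomorphic rather than merely homeomorphic. Calibrating the criterion so that it is strong enough to detect exotic end-germs, yet weak enough to hold in the absence of $C^2$ regularity, is the technical heart of the argument; once it is in place, the construction above yields Theorem~\ref{Thm1} directly.
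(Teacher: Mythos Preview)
Your construction is in the right spirit and close to the paper's: one implants pairwise non--end--diffeomorphic Taubes--like exotic $\R^4$'s at the points of an infinite \emph{discrete} subset $S\subset\Lambda$ (not at every end), so that the resulting smoothing is at most $k$--to--one at $S$ and every end in $S$ is smoothly nonperiodic. The uncountability of the family is obtained more simply than you propose, by varying one auxiliary Taubes--like end along the one--parameter family coming from Taubes' theorem; no $\Homeo(\Lambda)$--orbit argument is needed.

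The genuine gap is your proposed $C^{1,0}$ mechanism. The paper does \emph{not} use holonomy recurrence, transport of smooth germs along plaque chains, or any control on $C^1$ distortion; none of the Kopell/Denjoy/Sacksteder substitutes you contemplate are invoked. The argument is structural, following Ghys. First, one must show that a leaf $L$ diffeomorphic to $W$ is \emph{proper}; you omit this step entirely, and it is nontrivial: Reeb stability applied to a compact piece of $L$ would, if $L$ were nonproper, produce infinitely many disjoint diffeomorphic copies of that piece inside $L$, and one rules this out using the Taubes/Donaldson embedding obstructions carried by the exotic ends (for $M\cong S^4$ this requires the finer Taylor--like condition of Definition~\ref{d:Taylor exotica}, not an arbitrary exotic $\R^4$). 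Second, properness together with simple connectivity and simple connectivity at the ends gives, via Dippolito's product--neighborhood criterion (Proposition~\ref{p:product_neighborhood}), a saturated product neighborhood of $L$; this is the notion of $1$--rigidity. Combined with nonperiodicity, a Ghys--type dichotomy (Proposition~\ref{p:non-R}) forces the component $\Omega_1$ of leaves diffeomorphic to $W$ to fiber over $S^1$ with noncompact completion. Third, the contradiction comes from analysing the global \emph{monodromy} diffeomorphism $m:L\to L$ of this fibration through the octopus decomposition of $\widehat{\Omega}_1$: one shows (Lemma~\ref{l:monodromy ends}) that only finitely many ends of $L$ are met by the transverse boundary of the nucleus, so some nonperiodic end in $S$ has a closed separating hypersurface $\Sigma$ lying entirely in the escaping set $E^+$ (or $E^-$); the iterates $m^k(\Sigma)$ are then disjoint and properly embedded, and an elementary combinatorial argument (Lemma~\ref{l:No Exit Region}) forces that end to be periodic.

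In short, the $C^{1,0}$ regularity is used exactly once, to ensure that the transverse projections in the product neighborhood are diffeomorphisms (so nearby leaves really are diffeomorphic to $L$); everything afterwards is topological/combinatorial. Your picture of ``diffeomorphic recurrence of ends forced by holonomy along a compact transversal'' is not the mechanism, and as you yourself note, making such a recurrence argument work without $C^2$ tools would be a different and harder problem.
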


Recall that such a set $\Lambda\subset M$ is tame if and only if, for some metric in $M$ and for every $\epsilon>0$, $\Lambda$ is contained in the union of finitely many disjoint open balls of radius less than $\epsilon$ \cite{Bing}. In this case the complement $M\setminus\Lambda$ is simply connected and its topology does not depend on the tame embedding. There are wild embeddings of the Cantor set in the $4$--sphere with a non simply connected complement and, even if the complement is simply connected, the embedding may fail to be tame (see \cite{Skora} for an example in $S^3$ which can easily be generalized to higher dimensions).

We remark that some of the manifolds considered in Theorem~\ref{Thm1} are homeomorphic to leaves of codimension one foliations. We shall discuss the topological realization of these manifolds in the last section. For instance, the specific case where $\Lambda$ is a tame Cantor set and $M=S^4$ can easily be realized by a suspension of two circle diffeomorphisms generating a free group over $M$ with two $1$--handles attached. The same question for non-tame Cantor sets is open. These are, as far as we know, the first examples of manifolds that are nonleaves in the smooth category in codimension one but are leaves in the topological category; they are what we call {\em exotic nonleaves\/}.

We shall obtain new criteria for nonleaves for both $C^2$ and $C^{1,0}$ foliations in Theorems~\ref{t:C2 nonleaf criterion} and \ref{t:nonleaf criterion}. Theorem~\ref{t:C2 nonleaf criterion} is just a slight generalization of the method of proof used in \cite{Menino-Schweitzer} and it seems to be a corollary of the classical foliation theory of $C^2$ codimension one foliations. Theorem~\ref{t:nonleaf criterion} is the main result of this work as it allows us to construct many new families of $C^{1,0}$-nonleaves. Moreover, some of the examples of nonleaves given in \cite{JAP} are covered by this criterion, yielding an alternative proof that they are nonleaves. Its statement is rather technical, but it has the following simpler corollary:

\begin{thmA}\label{t:simple criterion}
Let $W$ be a smooth simply-connected manifold that is also simply connected at its ends (Definition \ref{d:end1connected}. Assume that $W$ has an infinite set of smoothly non-periodic ends that are not diffeomorphic to any other end. Then $W$ is not diffeomorphic to any leaf of any $C^{1,0}$ codimension one foliation on a compact manifold.
\end{thmA}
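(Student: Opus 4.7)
My strategy is to deduce Theorem~\ref{t:simple criterion} from the general criterion Theorem~\ref{t:nonleaf criterion} by verifying its hypotheses; equivalently, I argue by contradiction and use the classical structure theory of leaves in codimension-one foliations to pin down the smooth end structure of any hypothetical leaf realisation of $W$. So I suppose that $W$ is diffeomorphic to a leaf $L$ of some $C^{1,0}$ codimension-one foliation $\mathcal{F}$ on a compact manifold $M$, and I split into the standard dichotomy: $L$ is either semi-proper in the closure of a minimal set (including the proper case), or $L$ is dense in a minimal set of $\mathcal{F}$.

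In the semi-proper case, the tool is the Dippolito decomposition: $L = K \cup E_1 \cup \dots \cup E_n$, where $K$ is a compact nucleus and each arm $E_i$ carries a self-map $h_i\colon E_i \to E_i$ (the Dippolito ``translation'') whose image is a proper sub-arm and whose iterates exhaust $E_i$ towards its ends. Because the foliation is $C^{1,0}$, each $h_i$ is $C^{1}$ along $L$; because $W$ (and hence the $E_i$) is simply connected and simply connected at its ends, the unique-smoothing theorem for $C^{1}$ structures on open $4$-manifolds upgrades $h_i$ to a $C^\infty$ self-diffeomorphism. Every end inside a single arm is then in one smoothly periodic equivalence class under $h_i$, so $L$ can harbour at most $n$ smooth equivalence classes of end outside the finitely many ends of $K$. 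This already contradicts the hypothesis that $W$ has infinitely many pairwise non-diffeomorphic, smoothly non-periodic ends: each such end would have to sit in a distinct arm, forcing $n=\infty$.

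For the case where $L$ is dense in a minimal set, I argue that the minimality of the closure of $L$ forces each end of $L$ to exhibit recurrence: neighborhoods of any given end return arbitrarily close to themselves under holonomy transport along long loops in $L$. Restricted to a small transversal, such a recurrence produces homeomorphisms (and, again via the tangential $C^1$-regularity plus unique smoothing in dimension 4, diffeomorphisms) of arbitrarily small neighborhoods of the end into proper sub-neighborhoods, so every end of $L$ is smoothly periodic. But $W$ is hypothesised to have infinitely many smoothly non-periodic ends, so even one of them immediately contradicts this alternative.

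The main obstacle throughout is the regularity promotion: \emph{a priori} the Dippolito translations and the recurrence maps are only $C^{1,0}$-objects, and transversally only continuous, so they need not give smooth identifications of sub-arms or of end-neighborhoods in the exotica-rich world of open $4$-manifolds. The crucial point, which is exactly what the hypotheses of Theorem~\ref{t:simple criterion} are tailored to handle, is that simple connectedness of $W$ and of its ends removes any $\pi_1$-obstruction to extending the locally defined maps, and permits invocation of uniqueness of $C^\infty$-smoothings compatible with a $C^{1}$-structure on an open $4$-manifold; this is the delicate step that lets me promote every topological/$C^{1}$ periodicity of $L$ into \emph{smooth} periodicity of its ends, and thus to read off the contradiction at the level of smooth equivalence classes of ends as required.
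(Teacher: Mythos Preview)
Your first sentence names the correct route: Theorem~\ref{t:simple criterion} follows from Theorem~\ref{t:nonleaf criterion} once its hypotheses are verified. The simple-connectivity assumptions feed into Proposition~\ref{p:product_neighborhood} (and its corollary) to give the product neighborhood required for $1$-rigidity, and the hypothesised infinite set $S$ of nonperiodic ends, each diffeomorphic to no other end of $W$, makes $W$ at most $1$-to-one at $S$; pass to an infinite discrete subset of $S$ if necessary. That is essentially the whole argument.

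The ``equivalent'' direct argument you then present, however, does not work. Your dichotomy is not exhaustive (a nonproper leaf is dense in its closure, but that closure need not be a minimal set). More seriously, the Dippolito octopus decomposition (Theorem~\ref{t:octopus}) is a decomposition $\hat U=K\cup B_1\cup\cdots\cup B_n$ of a \emph{saturated open set}, not of the leaf $L$; the trace $L\cap B_i$ may have infinitely many connected components (see Lemma~\ref{l:monodromy ends}), and there is no self-map $h_i$ of an ``arm of $L$'' of the kind you describe. The global self-map that actually organises the ends is the \emph{monodromy} $m\colon L\to L$ of the fibration of $\Omega_1$ over $S^1$, which exists only after $1$-rigidity is in place and Propositions~\ref{p:non-R} and~\ref{p:leafspace} have been applied; the contradiction is then extracted via the separating-hypersurface analysis of Lemmas~\ref{l:No Exit Region} and~\ref{l:monodromy ends}, not from an arm-by-arm periodicity count.

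Finally, the ``regularity promotion'' paragraph is off target. Theorem~\ref{t:simple criterion} is not a $4$-dimensional statement, so no uniqueness-of-smoothings theorem for open $4$-manifolds is available, and none is needed: in a $C^{1,0}$ foliation the monodromy is already a leafwise $C^1$ diffeomorphism, and ``smoothly periodic'' in Definition~\ref{d:periodic_end} refers to the given smooth structure on $L\cong W$, with no upgrade required. Discard the alternative argument and simply carry out what your opening sentence promises.
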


The techniques used in this paper follow the same approach of proof as Ghys' arguments in \cite{Ghys1}. For $C^2$ regularity the main tool is the generalized Kopell lemma and the level theory for foliations as explained in \cite{Cantwell-Conlon2, Candel-Conlon}.

In this paper we deal mainly with the case where $\Lambda$ is infinite. The case of finitely many ends was already studied in \cite{Menino-Schweitzer} but we want to stress that there we were only able to obtain smoothings that are nonleaves in the $C^2$-category. In this context we shall only observe that there exist Stein nonleaves in $C^2$ regularity and no nonperiodic exotic $\R^4$ can be realized as a proper leaf of a $C^2$ codimension one foliation (this was not explicitly observed in \cite{Menino-Schweitzer}). 

Some other applications of the techniques used in \cite{Ghys1} combined with the Kopell lemma (see \cite[Theorem 2.8]{Cantwell-Conlon2}), Duminy's theorem (see \cite{Cantwell-Conlon3} for its general version for foliations) and Novikov's theorem allow us to obtain the interesting result that the only way to realize a simply connected and simply connected at infinity noncompact manifolds, such as $\R^n$, $n\geq 2$, topologically as a proper leaf of a codimension one $C^2$ foliation is in a (generalized) Reeb component (see Proposition~\ref{p:Novikov-Duminy-Kopell} and Remark~\ref{r:Novikov-Duminy-Kopell}). We are not sure whether this result had already been observed, but we have not found any explicit reference.

The paper is organized as follows: In Sections 1, 2 and 3 we present two criteria for nonleaves in both regularities $C^2$ and $C^{1,0}$; these sections are independent of the other ones and only use foliation theory. In Section 4 we present some well-known properties of $4$--manifolds with the exotic ends that we shall use. In Section 5 we present our families of nonleaves and show that they satisfy the hypotheses of the previously given criteria, so this will prove Theorem~\ref{Thm1}. We briefly discuss the topological realization of our nonleaves in Section 6.

\section{Necessary conditions to be leaves}  \label{s:Yinfty} 
Before constructing our families of nonleaves we shall give two criteria for manifolds to be nonleaves in both the $C^2$ and $C^{1,0}$ categories. Recall that a $C^{1,0}$ foliation is a foliation that has a continuous integrable plane field. 

\begin{definition}
A foliated (i.e., leaf preserving) map $h: (M,\FF)\to(N,\GG)$ between $C^{1,0}$ foliations is said to be of class $C^{1,0}$ if it is continuous, its restriction to each leaf is $C^1$, and there are atlases of $M$ and $N$ in which every first partial derivative in the leaf coordinates is also continuous as a function on $M$.
\end{definition}

In order to study nonleaves, we introduce two relevant properties of open smooth manifolds.

\begin{definition}\label{d:periodic_end} An end of a noncompact smooth manifold $W$ is {\em (smoothly) periodic} if there exists a neighborhood $V\subset M$ of that end and a diffeomorphism  $h: V\to h(V)\subset V$ such that $h^n(V)$ defines the given end (i.e., $\{h^n(V)\}$ is a neighborhood base for the end). Observe that this definition can be also applied to nonisolated ends. An open manifold is said to be {\em (smoothly)  nonperiodic} if at least one of its ends is not periodic in the previous sense.

Two ends of smooth manifolds are {\em diffeomorphic} if they have diffeomorphic neighborhoods. It will always be assumed that the orientation is preserved by the diffeomorphism. Two manifolds each with one end are {\em end--diffeomorphic} if their ends are diffeomorphic.
\label{d:periodic}\end{definition}

A smooth manifold $M$ with just one end and with that end smoothly periodic can be decomposed as $K\cup_\partial N\cup_\partial N\cup_\partial N\cdots$ glued along the boundaries, where $K$ 
is a compact smooth manifold with connected boundary and $N$ is a compact smooth manifold with two boundary components. All these boundary components are diffeomorphic, respecting the orientations. The {\em periodic segment\/} $N$ can be identified with $V\setminus h(\mathring{V})$ for an appropriate choice of $V\subset M$, and then $K=M\setminus \mathring{V}$.

\begin{definition}
A smooth manifold $W$ is said to be  {\em $1$-rigid} if the following condition holds:

Every leaf $L$ of a $C^{1,0}$ transversely oriented codimension one foliation on a compact manifold that is diffeomorphic to $W$ admits an open neighborhood $C^{1,0}$-diffeomorphic to a product foliation $L\times I$ where $I$ is an open interval. 
\end{definition}

\begin{remark}
We can define the concept of $k$-rigidity similarly for leaves of codimension $k$ foliations, but we shall not use this concept.
\end{remark}

The remainder of this section is devoted to proving the next proposition. We follow ideas of Ghys \cite{Ghys1}.

\begin{proposition}\label{p:non-R}
If a leaf of a $C^{1,0}$ transversely oriented codimension one foliation of a compact manifold is diffeomorphic to a $1$-rigid and smoothly non--periodic manifold $W$, then the union of all leaves diffeomorphic to $W$ is an open set each of whose connected components fibers over the circle with the leaves as fibers. The completion of each of these components relative to any Riemannian metric in the ambient manifold is noncompact.\end{proposition}

Observe that there always exists a smooth transverse one-dimensional foliation $\NN$ and a biregular foliated atlas, i.e., one in which each coordinate neighborhood is foliated simultaneously as a product by $\FF$ and $\NN$, obtained by integrating a smooth
transverse vector field (see e.g. \cite{Candel-Conlon}). The transverse coordinate changes are only assumed to be continuous but the leaves of $\FF$ can be taken to be $C^1$ manifolds and the local projection along $\NN$ of one plaque to another plaque in the same chart is a diffeomorphism. 

Our basic tool will be Dippolito's octopus decomposition \cite{Dippolito}. Given a saturated open set $U$ of $(M,\FF)$, let $\hat{U}$ be the completion of $U$ in a Riemannian metric of $M$ restricted to $U$. The inclusion $i: U\to M$ clearly extends to an immersion $i: \hat{U}\to M$, which is at most $2$--to--$1$ on the boundary leaves of $\hat{U}$. We shall use $\partial^\tau$ and $\partial^\pitchfork$ to denote tangential and transverse boundaries, respectively.

\begin{theorem}[Dippolito octopus decomposition \cite{Candel-Conlon,Dippolito}]\label{t:octopus}
Let $U$ be a connected saturated open set of a codimension one transversely orientable foliation $\FF$ on a compact manifold $M$.  Then there exists a compact submanifold $K$ (the nucleus) with boundary and corners such that
\begin{enumerate}
\item $\partial^\tau K \subset\partial^\tau\hat{U}$;

\item $\partial^\pitchfork K$ is saturated for $i^\ast\NN$;

\item the set $\hat{U}\setminus K$ is the union of finitely many non--compact connected components $B_1,\dots,B_m$ (the arms) with boundary, where each $\overline{B}_i$ is diffeomorphic to a product $\overline{S}_i\times[0,1]$ by a diffeomorphism $\phi_i: \overline{S}_i\times[0,1]\to \overline{B}_i$ such that the leaves of $i^\ast\NN$ exactly match the fibers $\phi_i(\{\ast\}\times[0,1])$;

\item the foliation $i^\ast\FF$ in each $B_i$ is defined as the suspension of a homomorphism from $\pi_1(S_i)$ to the group of homeomorphisms of $[0,1]$. Thus the holonomy in each arm of this decomposition is completely described by the action of $\pi_1(S_i)$ on a common complete transversal.
\end{enumerate}  \end{theorem}

\begin{definition}
A noncompact manifold $W$ is said to be {\em simply connected at its ends} if for every compact set $C\subset W$ there exists another compact set $C'\supset C$ such that for every $x\in W\setminus C'$ the natural inclusion $\pi_1(W\setminus C',x)\to\pi_1(W\setminus C,x)$ is the $0$-map. Note that this is slightly different from the notion of being simply connected at infinity, which requires the triviality of the map $\pi_0(W\setminus C')\to\pi_0(W\setminus C)$, so that $W$ can only have one end. \label{d:end1connected}
\end{definition}

The next Proposition is an adaptation of Dippolito's criterion for $1$-rigidity, which he enunciated in \cite[Theorem 4]{Dippolito} and corrected in \cite[Theorem 4']{Dippolito2}. We thank Takashi Inaba for pointing out this correction to us.

\begin{proposition}\label{p:product_neighborhood} 
Let $L$ be a proper leaf with trivial holonomy in a transversely oriented codimension one foliation $\FF$ on a compact manifold $Z$ with a transverse foliation $\NN$. If $L$ is  simply connected at its ends, then there exists an open $\FF$--saturated neighborhood $U$ of $L$ which is diffeomorphic to $L\times(-1,1)$ by a $C^{1,0}$--diffeomorphism which carries the bifoliation $\FF$ and $\NN$ to the product bifoliation. \end{proposition}

\begin{proof}
Let $U$ be a saturated one-sided neighborhood of $L$ in the octopus decomposition of $\hat U$ with nucleus $K$ and let $C= K\cap L$. By hypothesis there exists a compact submanifold $C' \subset L$ containing $C$ such that for every $x\in C'\setminus C$ the homomorphism $\pi_1(C'\setminus C,x)\to \pi_1(L\setminus C,x)$ defined by the inclusion is trivial. Since $L$ has trivial holonomy the compact set $C'$ has a bifoliated one-sided neighborhood $N\subset\hat U$. On each arm $B_i$ this bifoliated neighborhood determines a bifoliated $C^{1,0}$-diffeormophism $B_i \cap N \to (S_i \cap C') \times [0,t_i)$ where $S_i = B_i\cap L$ and $t_i \in (0,1]$. Since every loop in $S_i\setminus C'$ based at a point $x_i\in S_i\setminus C'$ becomes contractible in $S_i =B_i\cap L\setminus C$, the bifoliated product for $B_i\cap N$ extends to a bifoliated $C^{1,0}$-diffeomorphism $N_i\to S_i\times [0,t_i)$ where $N_i$ is a one-sided neighborhood of $S_i$ in $B_i$. Gluing together these homeomorphisms produces a one-sided bifoliated neighborhood $N\cup \bigcup_i N_i$ of $L$. The same argument works on the other side, and the union of the two one-sided neighborhoods is the desired two-sided bifoliated neighborhood of $L$.
\end{proof}


\begin{corollary}\label{c:end1connected}  If $L$ is a simply connected proper leaf of a $C^{1,0}$ codimension one transversely oriented foliation on a compact manifold and $L$ is simply connected at its ends, then it admits an open neighborhood $C^{1,0}$ diffeomorphic to the product foliation $L\times (0,1)$.
\end{corollary}
\begin{proof}
Since $L$ is simply connected, it has trivial holonomy, so Proposition \ref{p:product_neighborhood} applies.
\end{proof}

\begin{remark}\label{r:rigidity reduction}
Let $W$ be an open topological manifold obtained by puncturing a simply connected closed manifold of dimension $n\geq 3$ along a tame, closed and totally disconnected subset. It is an easy exercise to check that $W$ is simply connected and simply connected at its ends.
If, moreover, $W$ does not admit a realization as a nonproper leaf of any codimension one foliation (e.g., see Proposition \ref{p:proper} below), then Corollary \ref{c:end1connected} shows that any smoothing of $W$ is $1$-rigid.
\end{remark}

Let $\FF$ be a transversely oriented $C^{1,0}$ codimension one foliation on a compact manifold and let $W$ be a 
smoothly nonperiodic (in the sense of Definition~\ref{d:periodic_end}) and $1$-rigid
manifold. Let $\Omega_1$  be a connected component of the union of leaves of $\FF$ diffeomorphic to $W$. By the definition of $1$-rigidity, $\Omega_1$ is an open set on which the restriction $\FF_{|{\Omega_1}}$ is defined by a locally trivial fibration, so its leaf space is homeomorphic to a one-dimensional manifold, which must be Hausdorff 
since there is a well-defined local flow on $\Omega_1$ along leaves of $\NN$ that permutes the leaves of $\FF$.

\begin{lemma}\label{l:non-compact}
The completed manifold  $\widehat{\Omega}_1$ is not compact.
\end{lemma}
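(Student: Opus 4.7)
We argue by contradiction. Suppose $\widehat{\Omega}_1$ is compact. Applying Theorem~\ref{t:octopus} to the saturated open set $\Omega_1$, and noting that the arms $B_1,\dots,B_m$ are non-compact by construction, we conclude that there are no arms, so $\widehat{\Omega}_1 = K$ reduces to the nucleus. In particular, $\partial K$ consists of finitely many leaves of $\FF$, the boundary leaves, all of which lie in $M\setminus\Omega_1$.

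By $1$-rigidity together with the Hausdorffness of the leaf space of $\Omega_1$ noted above, $\FF|_{\Omega_1}$ is a locally trivial fibration whose base $B$ is a connected Hausdorff one-manifold. In particular every leaf of $\Omega_1$ is proper in $\Omega_1$, so two distinct leaves of $\Omega_1$ never accumulate on each other. Since every leaf $L\cong W$ is noncompact but contained in the compact set $K$, the $\omega$-limit set of each end $e$ of $L$ lies entirely inside $\partial K$.

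The plan is to show that every end of $W$ is smoothly periodic, contradicting the hypothesis. Pick an end $e$ of $W$, realize it in a leaf $L\cong W$ of $\Omega_1$, and choose a boundary leaf $L_0\subset \partial K$ meeting the $\omega$-limit of $e$ at some point $q$. Take a small transversal $\tau$ to $\FF$ through $q$; then $L\cap\tau$ contains a sequence $p_n\to q$. Join $p_n$ to $p_{n+1}$ by a leafwise path in $L$ and project this path along the local $C^{1,0}$-product structure of $\FF$ near $L_0$ to obtain a loop $\gamma_n\subset L_0$ based at $q$. The $\FF$-holonomy of $\gamma_n$ along $\tau$ sends $p_n$ to $p_{n+1}$, which is strictly closer to $q$ than $p_n$; this gives a contracting holonomy element at $q$. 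Pulling it back into $L$ through the product neighborhood of $L$ provided by $1$-rigidity produces a diffeomorphism $h\colon V\to h(V)\subsetneq V$ of a neighborhood $V$ of $e$ in $L\cong W$ whose iterates $\{h^k(V)\}_{k\ge0}$ form a neighborhood basis of $e$. Hence $e$ is a periodic end; since $e$ was arbitrary, $W$ is smoothly periodic, contradicting the hypothesis.

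The main obstacle is the passage from a contracting transverse holonomy element at $q\in L_0$ to a genuine contracting self-diffeomorphism $h$ of a neighborhood of $e$ inside $L$ itself. Working in the $C^{1,0}$ category, one must combine $1$-rigidity, the fibered structure $\Omega_1\to B$, and the one-sided product structure near the boundary leaf $L_0$ to promote the transverse contraction into a leafwise one with the required $C^1$ smoothness along leaves; this follows the strategy of Ghys~\cite{Ghys1}, suitably adapted to the $C^{1,0}$ setting.
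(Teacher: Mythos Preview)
Your overall strategy coincides with the paper's: assume $\widehat{\Omega}_1$ is compact and derive that every end of $L\cong W$ is smoothly periodic, contradicting the standing hypothesis. The paper simply invokes \cite[Lemma~2.16]{Menino-Schweitzer} for this implication, while you attempt to sketch it directly; the difficulty is that your sketch of the key step is not correct as written.

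Two specific problems. First, your construction produces a \emph{sequence} of holonomy germs (one for each $\gamma_n$), each of which merely moves a single point $p_n$ closer to $q$; this does not yield ``a contracting holonomy element at $q$''. One needs an additional argument (e.g.\ using compactness of $L_0$ and finite generation of $\pi_1(L_0)$, or the fibration of $\Omega_1$) to extract a fixed element whose iterates realize the approach to $q$. Second, and more seriously, the product neighborhood $L\times I$ furnished by $1$-rigidity is a neighborhood of $L$, not of $L_0$; it carries no information about the holonomy of the boundary leaf and cannot be used to ``pull back'' that holonomy into a self-map of an end-neighborhood of $L$. The correct mechanism is the one-sided collar of the \emph{compact} boundary leaf $L_0$: there the foliation is a suspension over $L_0$, the trace of $L$ is a covering space of $L_0$, and the deck/holonomy action provides the periodic shift $h$ on the end. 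This is essentially what the argument in \cite{Menino-Schweitzer} (following Ghys) does.

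So your final paragraph identifies the right obstacle, but the proposed resolution (via $1$-rigidity of $L$) is the wrong tool; the compactness of $L_0$ and its collar structure is what actually drives the periodicity.
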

\begin{proof}From the proof of \cite[Lemma 2.16]{Menino-Schweitzer}, if  $\widehat{\Omega}_1$ is compact then every end of a leaf in the interior of  $\widehat{\Omega}_1$ must be isolated and smoothly periodic, in contradiction to the nonperiodicity of $W$.
\end{proof}

Following the approach of Ghys \cite{Ghys1}, we have a dichotomy: the leaf space of $\FF_{|\Omega_1}$, which is a connected Hausdorff one--dimensional manifold, must be either $\R$ or $S^1$.

\begin{proposition}\label{p:leafspace}
	The leaf space of $\FF_{|\Omega_1}$ cannot be $\R$, so $\Omega_1$ must fiber over the circle with the leaves as fibers.
\end{proposition}
\begin{proof}
The proof is the same as the proof of Proposition 2.17 in \cite{Menino-Schweitzer}, which is also an adaptation of Lemma 4.6 in \cite{Ghys1}.
\end{proof}

\begin{remark}
Since $\Omega_1$ fibers over the circle, it follows that there exists on each leaf $L\subset \Omega_1$ a well-defined non-trivial global diffeomorphism, $m: L\to L$ which is defined by ``following'' the transverse direction, i.e., for any $x\in L$ there exists the transverse flowline $\NN_x$ through $x$; then $L\cap\NN_x$ is a discrete set in $\NN_x$ and $m(x)$ is defined to be the next point of $L\cap \NN_x$ after $x$ in the positive direction. This is just the {\em monodromy\/} of the fibration.
\end{remark}

\begin{definition} \label{d:E^+} Let $E^+\subset \Omega_1$ be the set of points $x$ for which $\{m^n(x)\}_{n\in\N}$ is unbounded and $E^-\subset \Omega_1$ the set of points for which $\{m^{-n}(x)\}_{n\in\N}$ is unbounded.
\end{definition}

\begin{proposition}\label{p:monodromy}

Let $\Omega_1$ be a connected open saturated set of a $C^{1,0}$ codimension one foliation on a compact manifold such that it fibers over the circle with the leaves as fibers and $\widehat{\Omega}_1$ is not compact. Then there exists a compact set $C\subset L$ such that $L\setminus C\subset E^+\cup E^-$.
\end{proposition}

\begin{proof}
	In the octopus decomposition of  $\widehat{\Omega}_1$, any point in the arms is in both $E^+$ and $E^-$ since in the arms the flow lines of $\NN$ are just intervals connecting boundary leaves of $\FF_{\widehat{\Omega}_1}$.
On each tangential boundary component of the nucleus, the monodromy is contracting or expanding, so in sufficiently small neighborhoods of  these boundary components one of the sequences $m^n(x)$ or $m^{-n}(x)$ is necessarily unbounded, so $x\in E^+\cup E^-$. Let $C$ be the set formed by removing the arms and small open neighborhoods of the tangential boundary components of the nucleus from $L$.

It just remains to show that $C$ is compact. If this were not the case, then $L$ would have transverse accumulation points in the interior of $K$ and, therefore, the leaf passing through one of these accumulation points would not admit a product neighborhood, contradicting the fact that $\Omega_1$ is a fibration with the leaves as fibers. 
\end{proof}

This completes the Proof of Proposition \ref{p:non-R}.

\section{A criterion for $C^2$ nonleaves}\label{s:nonleaf}  

This section is devoted to proving the next Theorem, which can be seen as a criterion for identifying $C^2$ nonleaves. 

\begin{theorem}\label{t:C2 nonleaf criterion}
Let $W$ be a $1$-rigid smooth manifold. Assume that there exists a smoothly nonperiodic end $\mathbf{e}$ of $W$ which is diffeomorphic to at most finitely many other ends of $W$. Then $W$ is not diffeomorphic to any leaf of any transversely oriented $C^2$ codimension one foliation on a compact manifold.
\end{theorem}

Now we require the higher smoothness $C^2$, so we should explain what we gain by this assumption.

\begin{definition}\label{d:trivial_inf}
Let $U$ be a saturated open set of a $C^{1,0}$ foliation $\FF$ on a compact manifold and let $L\subset U$ be a leaf. Then $L$ is said to be {\em trivial at infinity} for $U$ if there exists a Dippolito octopus decomposition $\hat{U}=K\cup B_1\cup\dots\cup B_n$ with total transversals $T_i$ for each $B_i$, such that, for every $i$, $L\cap T_i$ consists of fixed points for every element of the total holonomy group of the foliated fiber bundle $(B_i,\FF_{|B_i})$.
\end{definition}

The next theorem is a consequence of the so-called {\em generalized Kopell lemma\/} for foliations which can be found in \cite{Cantwell-Conlon2} and in \cite[Proof of Theorem 8.1.26, p. 182]{Candel-Conlon}.

\begin{theorem}\cite{Cantwell-Conlon2}\label{t:Cantwell-Conlon2}
Let $\FF$ be a transversely oriented codimension one $C^2$ foliation on a compact manifold.  Then every proper leaf $L$ is trivial at infinity for every saturated open set containing $L$.
\end{theorem}

We shall also use the following technical Lemma (see Definition \ref{d:E^+}).

\begin{lemma}\label{l:good block}
Let $\Omega$ be a connected saturated open set of a $C^2$ codimension one foliation on a compact manifold and suppose that $\Omega$ is a fibration over the circle with the leaves as fibers.  Then for every end $\mathbf{e}$ of any leaf $L$ in $\Omega$ there exists a (connected) neighborhood $N_\mathbf{e}$ of $\mathbf{e}$  such that either $N_\mathbf{e}\subset E^-$ or $N_\mathbf{e}\subset E^+$.
\end{lemma}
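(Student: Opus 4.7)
The plan is to combine the $C^2$ triviality-at-infinity theorem (Theorem~\ref{t:Cantwell-Conlon2}) with a direct analysis of the monodromy $m$ of the $S^1$-fibration on the sheet structure of $L$ inside each arm of an octopus decomposition of $\hat{\Omega}$. First, apply Theorem~\ref{t:Cantwell-Conlon2} to the proper leaf $L$ of $\FF_{|\Omega}$ (proper because the leaf space is $S^1$) to obtain an octopus decomposition $\hat{\Omega}=K\cup B_1\cup\cdots\cup B_r$ with $B_i\cong S_i\times[0,1]$ and complete transversals $T_i$ such that $L\cap T_i$ is fixed pointwise by the total arm holonomy group. Then every connected component of $L\cap B_i$ is a full horizontal sheet $\sigma=S_i\times\{t\}$ with $t\in L\cap T_i$, and $L\cap T_i$ is a discrete closed subset of $T_i\cap\Omega$ whose only possible accumulation points lie in $\partial T_i=\{0,1\}$. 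Since $L\cap K$ is compact, every end $\mathbf{e}$ of $L$ is an end of some single sheet $\sigma=S_i\times\{t_0\}$, and a cofinal family of neighborhoods of $\mathbf{e}$ has the form $U\times\{t_0\}$ with $U$ a neighborhood of the corresponding end of $S_i$; we shrink $U$ so that $U\times\{t_0\}\subset L\setminus C$ for the compact set $C$ provided by Proposition~\ref{p:monodromy}, which ensures every point of $U\times\{t_0\}$ lies in $E^+\cup E^-$.

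Next, read off the monodromy sheet by sheet. For $x=(s,t_0)\in\sigma$, the $\NN$-leaf through $x$ inside $B_i$ is the fiber $\{s\}\times[0,1]$ and meets $L$ exactly at $\{s\}\times(L\cap T_i)$; therefore, provided $t_1:=\min(L\cap T_i\cap(t_0,1])$ exists in the arm, $m(x)=(s,t_1)$, and this description is \emph{uniform in} $s$, so $m$ sends the whole sheet $\sigma$ onto the adjacent higher sheet $S_i\times\{t_1\}$. Iterating, as long as the iterates remain in $B_i$, $m^n$ translates $\sigma$ through the ordered list of heights in $L\cap T_i$ above $t_0$, with the symmetric statement for $m^{-1}$ and heights below $t_0$. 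The main dichotomy is then immediate. If $L\cap T_i$ has infinitely many heights above $t_0$, they accumulate at $1\in\partial T_i$, and for every $x\in U\times\{t_0\}$ the sequence $m^n(x)=(s,t_n)$ converges in $\hat{\Omega}$ to $(s,1)$, which lies on a boundary leaf of $\hat{\Omega}$ and hence is not in $L$. Consequently the forward orbit has no accumulation point in $L$ and is unbounded in $L$, so $U\times\{t_0\}\subset E^+$. A symmetric argument yields $U\times\{t_0\}\subset E^-$ when $L\cap T_i$ has infinitely many heights below $t_0$.

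In the remaining case, $L\cap T_i$ is finite on both sides of $t_0$, and after finitely many iterations the $m$-orbit of $\sigma$ exits $B_i$, crosses the compact nucleus $K$, and enters some arm $B_{i'}$ at a new sheet $\sigma'$; applying the same analysis to $\sigma'$ and iterating, $\sigma$ generates a sheet trajectory $\sigma=\sigma_0,\sigma_1,\sigma_2,\ldots$. Since there are only finitely many arms, this trajectory must either eventually enter an arm whose $L\cap T_{i_k}$ accumulates in the direction of flow (reducing to one of the two previous cases and giving $U\times\{t_0\}\subset E^{\pm}$ uniformly) or become eventually periodic at the sheet level; in the latter case the orbit of every $x\in U\times\{t_0\}$ would be confined to a finite union of sheets together with $L\cap K$ and hence bounded in $L$, contradicting the inclusion $U\times\{t_0\}\subset L\setminus C\subset E^+\cup E^-$ from Proposition~\ref{p:monodromy}. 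The main obstacle is the uniformity in this last case: one must verify that the sheet-level trajectory of the $m$-orbit of $x\in U\times\{t_0\}$ is independent of the particular $x$. This is precisely the payoff of triviality at infinity, which ensures that sheets are full horizontal slices $S_i\times\{t\}$ and that the shift $m$ performs inside each arm is uniform in the horizontal coordinate, so the combinatorics of arm visits is an invariant of the starting sheet rather than of the starting point.
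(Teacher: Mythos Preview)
Your argument has a genuine gap at the very first reduction step. You assert that ``$L\cap K$ is compact'' and conclude that every end of $L$ lives inside a single horizontal sheet $S_i\times\{t_0\}$ of some arm. This is false in general: the nucleus $K$ has tangential boundary $\partial^\tau K\subset\partial^\tau\hat\Omega$, and $L$ typically spirals toward these compact boundary leaves. Those spiral pieces lie in $K\setminus\partial^\tau K$ (an open, noncompact set), so $L\cap K$ is noncompact and carries ends of $L$ that never enter any arm at all. These are exactly the ends the paper handles with the neighborhoods $U_r$ of the components $C_r$ of $\partial^\tau K$ and the sets $A_r$; your proof simply does not see them. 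In the paper's argument one shows that $L\cap K'$ is compact only after excising such $U_r$, and then every end of $L$ is either an arm end (trivially in $E^+\cap E^-$) or an end of some $A_r$, which is shown to lie entirely in $E^+$ or entirely in $E^-$ according to the transverse orientation at $C_r$.

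A secondary issue: once triviality at infinity is in force, the sheet heights $L\cap T_i$ in each arm form a \emph{bi-infinite} discrete set on which the monodromy acts by the shift $t_{i,j}\mapsto t_{i,j+1}$; this is because the vertical fiber $\{s\}\times(0,1)$ maps to the base $S^1$ as a local diffeomorphism and hence as the universal covering, so the preimage of the basepoint is $\Z$-indexed. Thus your elaborate ``finite heights above/below'' case and the subsequent sheet-trajectory recursion through the nucleus never actually occur, and the arm contribution reduces immediately to the observation that every arm sheet lies in $E^+\cap E^-$. The substantive content of the lemma is precisely the spiral ends you omitted.
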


As a corollary we get the proof of Theorem~\ref{t:C2 nonleaf criterion}.

\begin{proof}[Proof of Theorem~\ref{t:C2 nonleaf criterion}]
Let $\Omega_1$ be a connected component of the open set formed by all leaves diffeomorphic to $W$. By Proposition \ref{p:leafspace}, $\Omega_1$ fibers over the circle with the leaves as fibers. Let $\mathbf{e}$ be the smoothly   nonperiodic end of $L$ which is diffeomorphic to at most finitely many other ends of $L$ and let $m$ be the monodromy map of $\Omega_1$.
By Lemma~\ref{l:good block}, there is 
a closed neighborhood $N_\mathbf{e}$ of $\mathbf{e}$ contained in $E^+$ or $E^-$; assume that it is contained in $E^+$.

Since this end is diffeomorphic to at most finitely many other ends, it follows that some iterate $m^k$, $k>0$, must fix $\mathbf{e}$, thus $m^k(N_\mathbf{e})\cap N_\mathbf{e}\neq\emptyset$.
Assume that $N_\mathbf{e}$ is the closure of a connected component of $L\setminus C$, where $C$ is the compact set constructed in Proposition~\ref{p:monodromy}. Since $m(E^+)=E^+$, $m^k(N_\mathbf{e})$ cannot meet $C$ and therefore $m^k(N_\mathbf{e})\subset N_\mathbf{e}$. Since $N_\mathbf{e}\subset E^+$ it follows that, for some $\ell\in\N$, $m^{k\cdot\ell}(N_\mathbf{e})\subset \mathring{N}_\mathbf{e}$. Since $\{m^r(\partial N_\mathbf{e})\}_{r\in\N}$ is a sequence of compact sets converging to $\mathbf{e}$, the end $\mathbf{e}$ must be smoothly    periodic, which is a contradiction.
\end{proof}

\begin{proof}[Proof of Lemma~\ref{l:good block}]
Let $\Omega$ satisfy the hypotheses of the Lemma and let $L$ be a leaf in $\Omega$. Then  by Proposition~\ref{p:monodromy} there exists a compact set $C\subset L$ such that $L\setminus C\subset E^+\cup E^-$. 
By Theorem \ref{t:Cantwell-Conlon2} we can choose an octopus decomposition for  $\widehat{\Omega}$ with the properties in Definition~\ref{d:trivial_inf}. 
Thus for each $i$ there exists a closed discrete bi--sequence $\{t_{i,j}\}_{i,j\in\Z}\subset (0,1)$ such that $L\cap B_i=\bigsqcup_{j\in\Z}S_i\times\{t_{i,j}\}$, where the monodromy acts by ``translation,'' $m(x,t_{i,j})= (x,t_{i,j+1})$ for $x\in S_i$. 
The boundary of $\bar S_i$ is a smooth compact $(n-1)$--manifold (where $n$ is the dimension of $L$) which we may assume to be connected; if not, just connect the components by tubes in $S_i$ and enlarge the nucleus by the saturation of these tubes under the transverse flow $\NN_{|\hat{\Omega}}$.

Observe that each set $L \cap B_i$ is contained simultaneously in $E^+$ and $E^-$, since the transverse flow in the arms connects one boundary component of $B_i$ with the other one. Each $S_i \times \{t_{i,j}\}$ is a neighborhood of an end of $L$ contained simultaneously in $E^+$ and $E^-$, so every end in an arm satisfies the conclusions of the Lemma.

Let $C_1, C_2, \dots, C_k$ be the connected components of $\partial^\tau K$. Note that the holonomy of each $C_r$, $r=1,\dots, k$, is nontrivial, and it must be infinite cyclic since $\Omega$ fibers over the circle and so any nontrivial holonomy map cannot have fixed points in $\Omega$. 

Hence for a sufficiently small open neighborhood $U_r$ of $C_r$ in $K$, chosen so that $\partial\overline U_r$ is transverse to $L$ and $\NN_{|\overline U_r}$ is an interval fibration over a neighborhood of $C_r$ in the boundary leaf, every connected component of $L\cap \overline U_r$ spirals toward $C_r$ and is contained in $E^+$ if the transverse orientation of $\FF$ points towards $C_r$, or in $E^-$ in the contrary case. Now $K'=K\setminus \bigcup_r U_r$ is a compact set and so is $L\cap K'$; this follows from the fact that $L$ is a proper leaf that cannot accumulate on any other leaf in $\Omega$ (which is a fibration), so $L$ cannot have transverse accumulation points in the interior of $K$. 

Now $\partial (L\cap \overline U_r)$ is composed of countably many compact boundary components, all but finitely many of which agree with boundary components of the $\overline{L\cap B_i}$'s that are diffeomorphic copies of the $\partial\overline S_i$'s, which are compact $(n-1)$-submanifolds. Let $A_r$ be the union of $L\cap\overline U_r$ with the connected components of the $\overline{L\cap B_i}$'s which meet $\overline U_r$. By construction, $\partial A_r$ consists of finitely many compact components.

It is also clear that each $A_r$ is completely contained in $E_+$ or in $E_-$ since the pieces attached to $L\cap \overline U_r$ are simultaneously contained in $E_+$ and $E_-$ and $\overline U_r$ is contained in one of these two sets by construction.

We choose the $U_r$'s to be thin enough so that $A_r\cap A_s=\emptyset$ for $r\neq s$. This is possible since the triviality at infinity of $L$ implies that a connected component in any $L\cap B_i$ cannot be close to both tangential boundary components of the arm at the same time. Note that the union of the $ A_r$'s and the components of the $L\cap B_i$'s cover $L\setminus K'$.
Therefore every end of $L$ that is not an end in an arm $\overline B_i$ is an end of some $A_r$, and then $A_r$ is a neighborhood of that end  contained in $E_+$ or $E_-$.
\end{proof}

When $W$ has finitely many ends we can improve Theorem~\ref{t:C2 nonleaf criterion}. This is in fact a consequence of the theory of levels, see e.g.  \cite[Chapter 8]{Candel-Conlon}.

\begin{theorem}\label{t:C2 nonleaf finite ends}
Let $W$ be a smooth open manifold with finitely many ends. Assume that some end of $W$ is smoothly non-periodic. Then $W$ cannot be diffeomorphic to a proper leaf of a transversely oriented codimension one $C^2$ foliation on a compact manifold.
\end{theorem}
\begin{proof}
Assume that $W$ is diffeomorphic to a proper leaf $L$ of a $C^2$ codimension one foliation. In particular $L$ is a local minimal set of $\FF$ in the sense of \cite[Definition 8.1.17]{Candel-Conlon}, according to \cite[Proposition 8.1.19]{Candel-Conlon}, and belongs to a finite level by \cite[Corollary 8.3.10]{Candel-Conlon}. By Duminy's theorem (see e.g. \cite{Cantwell-Conlon3}) $L$ cannot contain an exceptional minimal set in its limit set (otherwise $L$ would have infinitely many ends). Since $L$ has finitely many ends, it follows from \cite[Corollary 8.4.7]{Candel-Conlon} that $L$ must lie at depth $1$. Thus its limit set consists only of compact leaves. 

Each end must accumulate on only one compact leaf (otherwise, a leaf at intermediate level would appear between $L$ and the compact leaf) and this accumulation must be periodic. It follows that each end of $L$ is periodic in contradiction to the hypothesis.
\end{proof}

\begin{corollary}
If $\mathbf{R}$ is a nonperiodic exotic $\R^4$ then it cannot be diffeomorphic to any proper leaf of any $C^2$ codimension one foliation on a compact manifold.
\end{corollary}

\begin{remark}\label{r:C2 nonleaf finite ends} The advantage of Theorem~\ref{t:C2 nonleaf finite ends} with respect to Theorem~\ref{t:C2 nonleaf criterion} involves  avoiding the condition of $1$-rigidity, which is often difficult to check. According to Theorem~\ref{t:C2 nonleaf finite ends}, an open manifold with finitely many ends, where at least one of them is not periodic, will be a $C^2$ nonleaf if and only if it cannot be realized as a nonproper leaf. Checking this last property is, in general, much simpler than checking $1$-rigidity.
\end{remark}

\begin{remark}\label{r:C2 nonperiodic proper infinite ends}
The theory of levels allows us to improve Theorem~\ref{t:C2 nonleaf finite ends} to manifolds whose set of ends has finite topological type (i.e. where the $k$th derived set of its endset is finite), see \cite[Chapter 8]{Candel-Conlon} for more details. We want to stress the fact that there exist $C^\infty$ codimension one foliations on compact manifolds with proper leaves that have nonperiodic ends. An explicit example can be found in our forthcoming work \cite{Menino-Schweitzer2}.
\end{remark}

\section{A criterion for $C^{1,0}$ nonleaves}\label{s:nonleaf 0}
In this section we present a new criterion for an open manifold $W$ not to be diffeomorphic (or homeomorphic) to any leaf of any $C^{1,0}$ codimension one foliation on a compact manifold. We shall follow the same method of proof as Ghys in \cite{Ghys1}, but applied to a different class of manifolds. Throughout this section $\xi(W)$ will denote the space of ends of $W$.

\begin{definition}\label{d:k-to-one}
Let $W$ be an open smooth manifold and let $S\subset\xi(W)$ be a subset of ends. We say that $W$ is {\em at most $k$--to--one at $S$} if each end in $S$ is diffeomorphic to at most $k-1$ other ends in $\xi(W)$.
\end{definition}

\begin{lemma}\label{l:fixed ends}
Let $W$ be a manifold that is at most $k$--to--one at some $S\subset \xi(W)$ for some $k\in\N$. Then for any diffeomorphism $h: W\to W$, $h^{k!}$ induces the identity map on the set $S$.
\end{lemma}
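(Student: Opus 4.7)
The plan is to exploit the fact that a self-diffeomorphism of $W$ permutes the space of ends $\xi(W)$ while respecting the diffeomorphism type of each end. First, I would record that any diffeomorphism $h: W \to W$ induces a bijection $h_*: \xi(W) \to \xi(W)$: if $\{V_n\}$ is a decreasing neighborhood base of an end $\mathbf{e}$, then $\{h(V_n)\}$ is a decreasing neighborhood base for a well-defined end $h_*(\mathbf{e})$, and the construction is functorial in $h$. Moreover, for each $n$ the restriction $h|_{V_n}: V_n \to h(V_n)$ is a diffeomorphism of open neighborhoods, so in the sense of Definition~\ref{d:periodic} the end $h_*(\mathbf{e})$ is diffeomorphic to $\mathbf{e}$.

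Next, I would fix $\mathbf{e} \in S$ and look at its forward orbit $O(\mathbf{e}) = \{h_*^n(\mathbf{e}) : n \geq 0\}$. By induction on $n$, applying the previous step to $h^n$ at each stage, every element of $O(\mathbf{e})$ is diffeomorphic to $\mathbf{e}$. Since $W$ is at most $k$-to-one at $S$, the set of ends diffeomorphic to $\mathbf{e}$ contains $\mathbf{e}$ and at most $k-1$ others, so $|O(\mathbf{e})| \leq k$.

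The restriction of the bijection $h_*$ to the finite, invariant set $O(\mathbf{e})$ is therefore a cyclic permutation of some period $m_{\mathbf{e}} \leq |O(\mathbf{e})| \leq k$. Hence $h_*^{m_{\mathbf{e}}}(\mathbf{e}) = \mathbf{e}$, and since $m_{\mathbf{e}}$ divides $k!$ this forces $h_*^{k!}(\mathbf{e}) = \mathbf{e}$. Letting $\mathbf{e}$ range over $S$ yields the lemma.

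There is really no substantive obstacle here: the only point that warrants care is the very first step, that the end $h_*(\mathbf{e})$ really is diffeomorphic to $\mathbf{e}$ in the sense used in the paper (diffeomorphic \emph{neighborhoods}), but this is immediate from the fact that $h$ itself furnishes the required diffeomorphism between any neighborhood base of $\mathbf{e}$ and its image.
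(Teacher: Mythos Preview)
Your proof is correct and follows the same approach as the paper: a self-diffeomorphism permutes ends while preserving their diffeomorphism type, so each end in $S$ lies in an $h_*$-orbit of size at most $k$, hence is fixed by $h_*^{k!}$. The paper's argument is just a terser version of yours.
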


\begin{proof}
Since $h$ is a diffeomorphism, it maps ends to ends diffeomorphically. Since an end of $S$ is diffeomorphic to at most $k-1$ other ends, it follows that $h^{k!}$ must fix every end in $S$.
\end{proof}

Now let $\Sigma\subset W$ be a connected bicollared codimension one closed (compact and connected) submanifold of $W$ that separates $W$ into two connected components.

\begin{remark} 
Let $h:W\to W$ be a homeomorphism that fixes each end belonging to a set $S\subset \xi(W)$ and suppose that
$\Sigma\cap h(\Sigma)=\emptyset$. Let $[\Sigma,h(\Sigma)]$ be the closure of the connected component of $W\setminus(\Sigma\sqcup h(\Sigma))$ bounded by $\Sigma$ and $h(\Sigma)$. There are two possibilities for $[\Sigma,h(\Sigma)]$:
\begin{itemize}
\item $h([\Sigma,h(\Sigma)])\cap [\Sigma,h(\Sigma)] =h(\Sigma)$ and $[\Sigma,h(\Sigma)]$ does not contain ends in $S$.

\item  $h([\Sigma,h(\Sigma)])\cap [\Sigma,h(\Sigma)]\neq h(\Sigma)$ and the endset of $[\Sigma,h(\Sigma)]$ contains $S$ since this case is only possible when $h$ interchanges the ends on each component of $W\setminus [\Sigma,h(\Sigma)]$.
\end{itemize}\label{r:homology-id}
\end{remark}

\begin{lemma}\label{l:No Monodromy}
Let $W$ be a noncompact oriented $n$-manifold that is at most $k$--to--one at a discrete subset $S$ of $\xi(W)$ with $\# S > 2$. Suppose that $\Sigma\subset W$ (as above) separates ends in $S$, i.e., both components of $W\setminus\Sigma$ contain ends of $S$. Then there does not exist any orientation preserving homeomorphism $h:W\to W$ such that the set $\bigcup_{r\in\Z} h^{r}(\Sigma)$ is properly embedded in $W$ and $h(\Sigma)\cap\Sigma=\emptyset$.
\end{lemma}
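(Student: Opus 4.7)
The plan is to suppose such an orientation-preserving homeomorphism $h$ exists and derive a contradiction from $\#S>2$ together with the properness of the orbit of $\Sigma$. My strategy has three stages: pass to a convenient iterate of $h$ and establish monotone nesting of the two sides of the iterated hypersurfaces; construct a compact set $K$ separating three distinguished ends into different components of $W\setminus K$; and then push $g^n(\Sigma)$ into a single component by proper embedding, extracting incompatible containments for the two sides of $g^n(\Sigma)$.

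By Lemma~\ref{l:fixed ends}, the iterate $g:=h^{k!}$ is an orientation-preserving homeomorphism of $W$ fixing every end of $S$ pointwise, and the iterates $g^n(\Sigma)$, $n\in\Z$, are pairwise disjoint (since $\bigcup_k h^k(\Sigma)$ is properly embedded). After labeling the components of $W\setminus\Sigma$ as $W_+$ and $W_-$ with $g(\Sigma)\subset W_+$, orientation preservation combined with $g(\Sigma)\cap\Sigma=\emptyset$ forces $\Sigma\subset g(W_-)$. Writing $A_n:=g^n(W_-)$ and $B_n:=g^n(W_+)$, this gives $A_n\subset A_{n+1}$ and $B_{n+1}\subset B_n$ for every $n$, with $\Sigma\subset A_n$ for every $n\geq 1$. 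Setting $S_+:=S\cap W_+$ and $S_-:=S\cap W_-$ (both nonempty by the separation hypothesis), since $g$ fixes $S$ pointwise one gets $S_+\subset B_n$ and $S_-\subset A_n$ for every $n\in\Z$.

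Using $\#S>2$ and pigeonhole, I would pick distinct $e_1,e_2\in S_+$ and some $e_3\in S_-$, then construct a connected compact set $K\subset W$ containing $\Sigma$ such that $e_1,e_2,e_3$ lie in three different connected components $V_1,V_2,V_3$ of $W\setminus K$: concretely, $K=\Sigma\cup K'$ where $K'\subset W_+$ is a compact set separating $e_1$ from $e_2$ inside $W_+$; then $V_1,V_2$ are the two components of $W_+\setminus K'$ containing $e_1,e_2$, and $V_3=W_-$ contains $e_3$. By the proper embedding, $g^n(\Sigma)\cap K=\emptyset$ for every $n$ sufficiently large, so $g^n(\Sigma)$ lies in a single component $V_j$ of $W\setminus K$ by connectedness.

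Finally, since $g^n(\Sigma)$ separates $W=A_n\sqcup g^n(\Sigma)\sqcup B_n$ and the complement $W\setminus V_j$ is connected (consisting of the connected compact $K$ together with the other components $V_i$ attached to $K$ via their boundaries), either $W\setminus V_j\subset B_n$ or $W\setminus V_j\subset A_n$. In the first case $A_n\subset V_j\subset W\setminus K$, contradicting $\Sigma\subset A_n\cap K$. In the second case $B_n\subset V_j$, whence the distinct ends $e_1,e_2$ would both lie in the single component $V_j$, contradicting $e_1\in V_1\neq V_2\ni e_2$. Either alternative yields the sought contradiction; the main technical point to get right is the construction of $K$ and the verification that $W\setminus V_j$ is connected, so that the dichotomy above is exhaustive.
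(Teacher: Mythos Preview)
Your argument has a genuine gap at the nesting step. You claim that ``orientation preservation combined with $g(\Sigma)\cap\Sigma=\emptyset$ forces $\Sigma\subset g(W_-)$,'' but this is false in general. Take $W=S^1\times\R$, $\Sigma=S^1\times\{0\}$, $W_+=S^1\times(0,\infty)$, $W_-=S^1\times(-\infty,0)$, and $g(\theta,t)=(-\theta,1-t)$. Then $g$ is an orientation-preserving homeomorphism (its Jacobian in local coordinates is $\operatorname{diag}(-1,-1)$), $g(\Sigma)=S^1\times\{1\}\subset W_+$ is disjoint from $\Sigma$, yet $g(W_-)=S^1\times(1,\infty)$ does not contain $\Sigma$; rather $\Sigma\subset g(W_+)$. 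What actually rules out this ``flipping'' case is not orientation but the end-fixing property you already established: if $\Sigma\subset g(W_+)$ then $g$ interchanges the ends lying in the two outer pieces of $W\setminus[\Sigma,g(\Sigma)]$ (this is exactly Remark~\ref{r:homology-id}), and since both pieces contain ends of $S$ which $g$ fixes, this is impossible. Once you invoke end-fixing here instead of orientation, your nesting $A_n\subset A_{n+1}$, $B_{n+1}\subset B_n$ does follow.

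With that repair your strategy is sound, though more circuitous than the paper's. The paper assembles the bi-infinite chain $W=\cdots\cup_\partial C_{-1}\cup_\partial C_0\cup_\partial C_1\cup_\partial\cdots$ of slabs $C_k=h^k([\Sigma,h(\Sigma)])$, observes (again via Remark~\ref{r:homology-id}) that no $C_k$ carries an end of $S$, and concludes that $S$ contains at most the two ends determined by the bisequence $\{h^k(\Sigma)\}$. Your route---separating three chosen ends by a compact $K$ and trapping $g^n(\Sigma)$ in a single component of $W\setminus K$---reaches the same contradiction without writing down the full slab decomposition. Two small points still need tightening: first, make sure $K$ is genuinely connected (your $\Sigma\cup K'$ with $K'\subset W_+$ is a disjoint union; it is cleaner to take a connected compact exhaustion piece containing $\Sigma$ and separating the three ends); second, if the pigeonhole puts the pair $e_1,e_2$ in $S_-$ rather than $S_+$, run the final step with large negative $n$, using $\Sigma\subset B_n$ for $n\le -1$.
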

\begin{proof}
Assume such a homeomorphism $h$ exists. Then, passing to an iterate of $h$, by Lemma \ref{l:fixed ends} we can assume that $h$ fixes every end of $S$.
Set $C_r=h^r([\Sigma,h(\Sigma)])$ and let $V_r^0$ (resp., $V_r^1$) be the closure of the connected component of $W\setminus C_r$ bounded by $h^{r}(\Sigma)$ (resp., $h^{r+1}(\Sigma)$).

If $h(C_0)\cap C_0\neq h(\Sigma)$ then $S$ must be completely contained in the endset of $C_0$ (Remark \ref{r:homology-id}), but this cannot occur since $\Sigma$ separates ends in $S$. Thus we must have $h(C_0)\cap C_0=h(\Sigma)$  and consequently $W=V_0^0\cup C_0\cup V_0^1$. Applying $h^r$ for every $r\in\Z$ we get that $W=V_r^0\cup C_r\cup V_r^1$, with disjoint interiors. Furthermore the end set of $C_r$ does not contain any end in $S$ (again by Remark~\ref{r:homology-id}). Since $\Sigma$ (and also any $h^r(\Sigma)$ separate ends in $S$, the endsets of both $V^0_r$ and $V^1_r$ contain ends in $S$. It follows that $V_{r+1}^0=V_r^0\cup_\partial C_r$ and $V_r^1=V_{r+1}^1\cup_\partial C_r$ for all $r\in\Z$, 
for otherwise $h$ would permute the ends in $V_r^1$ with those in $V_{r+1}^0$ and that is not possible since ends of $S$ are fixed by $h$. Let $W(\Sigma,h)=\cdots C_{r}\cup_\partial C_{r+1}\cup_\partial\cdots$ be the codimension $0$ submanifold of $W$ obtained by sequentially gluing the components $C_r$. It is clear that $W(\Sigma,h)$ is an open subset, but since $\bigcup_{r\in\Z}h^r(\Sigma)$ is properly embedded it also follows that this submanifold is closed. Therefore $W=W(\Sigma,h)$.

Since no $C_r$ has an end in $S$, it would follow that $W$ can have at most two ends in $S$: the two ends defined by the bisequence $\{h^r(\Sigma)\}_{r\in\Z}$, but this contradicts the hypothesis that $\# S>2$.
\end{proof}

\begin{lemma}\label{l:No Exit Region}
Let $W$ be a smooth noncompact oriented $n$-manifold 
that is at most $k$--to--one at a subset $S$ of smoothly non-periodic ends in $\xi(W)$ with $\# S > 1$. Let $\Sigma\subset W$ be a connected codimension one closed smooth  submanifold such that $\Sigma$ separates ends in $S$. Then there does not exist any orientation preserving diffeomorphism $h:W\to W$ such that the set $\bigcup_{r\in\N} h^{r}(\Sigma)$ is properly embedded in $W$ and $h(\Sigma)\cap\Sigma=\emptyset$.
\end{lemma}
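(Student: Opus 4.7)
The plan is to suppose such an $h$ exists and manufacture from it a smoothly periodic end in $S$, contradicting the non-periodicity hypothesis.

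First I would replace $h$ by the iterate $h^{k!}$; by Lemma~\ref{l:fixed ends} this iterate fixes each end of $S$ pointwise, and the remaining hypotheses persist, since $\bigcup_{j\geq 0} h^{jk!}(\Sigma)$ is contained in the properly embedded union $\bigcup_{j\geq 0} h^j(\Sigma)$ and the pairwise disjointness of the $h^j(\Sigma)$ (implicit in the properly embedded condition, as used in Lemma~\ref{l:No Monodromy}) gives $h^{k!}(\Sigma)\cap\Sigma=\emptyset$. Denote by $A,B$ the two components of $W\setminus\Sigma$ and by $S_A,S_B$ their (both nonempty) intersections with $S$. After relabeling I may assume $h(\Sigma)\subset B$. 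Since $h$ now fixes every end of $S_A$ and $A$ is disjoint from $h(\Sigma)$, $A$ and $h(A)$ lie in the same component of $W\setminus h(\Sigma)$; as $h(A)$ is itself a component, this gives $A\subset h(A)$ and hence $h(B)\subsetneq B$. Iterating produces a strictly nested sequence $B\supsetneq h(B)\supsetneq h^2(B)\supsetneq\cdots$, a periodic decomposition $\overline{B}=\bigcup_{k\geq 0} C_k$ with $C_k=h^k([\Sigma,h(\Sigma)])$, and any chosen $e^+\in S_B$ belongs to the end closure of every $h^k(\overline{B})$.

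The heart of the argument, and the step I expect to be the main obstacle, is the following local finiteness claim: for every compact set $K\subset W$, only finitely many slabs $C_k$ meet $K$. I would prove it by contradiction. Pick $x_k\in K\cap C_{m_k}$ with $m_k\to\infty$ and extract a subsequence $x_k\to x\in K$. Using the properly embedded hypothesis, choose a coordinate ball $U$ around $x$ meeting only finitely many of the walls $h^j(\Sigma)$; these cut $U$ into finitely many connected open pieces, each lying in the interior of a single $C_k$ (or in $A$). Hence only finitely many indices $k$ satisfy $C_k\cap U\neq\emptyset$, contradicting $x_k\in U$ for large $k$ with $m_k\to\infty$. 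The subtlety is that the slabs $C_k$ themselves need not be compact, so local finiteness must come purely from the properly embedded walls rather than from any compactness of the slabs.

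Given the claim, $h^k(\overline{B})=\bigcup_{j\geq k} C_j$ is disjoint from any fixed compact $K$ once $k$ is large. Since $h^k(\overline{B})$ is connected and has $e^+$ in its end closure, it sits inside the component of $W\setminus K$ defining $e^+$. Varying $K$ shows that $\{h^k(B)\}_{k\geq 0}$ is a neighborhood base for $e^+$ in the end compactification of $W$, and combined with the diffeomorphism $h\colon B\to h(B)\subsetneq B$, Definition~\ref{d:periodic_end} then declares $e^+$ to be smoothly periodic, contradicting $e^+\in S$ and completing the proof.
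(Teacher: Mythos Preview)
Your proof is correct and follows the paper's argument closely: both pass to an iterate of $h$ fixing the ends of $S$, establish the nesting $h(\overline B)\subset B$, and conclude that the chosen end $e^+\in S_B$ is periodic via the slab decomposition. The one step you leave unjustified is the identity $\overline B=\bigcup_{k\ge0}C_k$ (equivalently $h^k(\overline B)=\bigcup_{j\ge k}C_j$); your local-finiteness argument only shows that each connected piece of a small ball $U$ lies in $A$, in some $\operatorname{int}(C_k)$, \emph{or} in the residual $Q:=\bigcap_{k\ge0}h^k(\overline B)$, and the last alternative must be excluded before you can deduce that $h^k(\overline B)$ eventually avoids a given compact set. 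This is quick with the same ball idea: for $x\in Q$ choose a connected $U\ni x$ meeting only finitely many walls; for all large $k$ the ball $U$ misses $h^k(\Sigma)$ yet meets $h^k(\overline B)$ (it contains $x$), so by connectedness $U\subset h^k(B)$, hence $U\subset Q$. Thus $Q$ is open as well as closed and, being disjoint from $A$ in the connected manifold $W$, empty. The paper makes the analogous clopen argument in the proof of Lemma~\ref{l:No Monodromy} and invokes it here with the phrase ``by a similar argument.''
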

\begin{proof}
Assume such a homeomorphism $h$ exists. As above, we can assume without loss of generality that $h$ fixes the ends in $S$. We shall use the notations $C_r$ and $V_r^i$ used to prove the previous lemma. Observe also that, as above, $h(C_0)\cap C_0=h(\Sigma)$. Therefore, by a similar argument, it follows that $V^1_0=C_1\cup_\partial C_2\cup_\partial\cdots$, where the endset of each $C_r$ cannot contain any ends in $S$. It follows that $V^1_0$ can only contain one end in $S$, precisely the one defined by the sequence $\{h^{r}(\Sigma)\}_{r\in\N}$. But this end is periodic since the $C_r$'s are diffeomorphic via $h$ and this contradicts the fact that ends in $S$ are nonperiodic.
\end{proof}

The aim of this section is to prove the following theorem:

\begin{theorem}\label{t:nonleaf criterion}
Let $W$ be a $1$-rigid smooth $n$-manifold. Assume that $W$ is at most $k$--to--one at an infinite discrete set of smoothly non-periodic ends $S\subset\xi(W)$. Then $W$ is not diffeomorphic to any leaf $L$ of any transversely oriented $C^{1,0}$ codimension one foliation on a compact manifold.
\end{theorem}

\begin{notation}\label{n:good_octopus}
Let  $L$ be a leaf of a $C^{1,0}$ codimension one foliation of a compact manifold that is  diffeomorphic to an $n$-manifold $W$ that satisfies the hypotheses of Theorem~\ref{t:nonleaf criterion}. By Proposition~\ref{p:non-R} it follows that $L$ is contained in a connected open saturated set $\Omega_1$ whose leaves are diffeomorphic to $W$, which fibers over the circle with the leaves as fibers, and such that its completion $\widehat\Omega_1$ is non-compact. Let $K\cup B_1\cup\dots B_{m_0}$ be a Dippolito decomposition of  $\widehat{\Omega}_1$ (with $m_0>0$) and let $S_i$ be the corresponding base manifold of each arm $B_i$ with boundary $\partial\overline S_i =\Sigma_i$. As in the proof of Lemma~\ref{l:good block}, we can assume that each $\Sigma_i$ is connected.

Recall that the transverse boundary of the nucleus is nonempty since $\widehat{\Omega}_1$ is non-compact and it is a union of the transverse boundaries of the closures of the arms. The monodromy preserves the transverse boundary; moreover, the foliation $\FF$ restricted to $\partial^\pitchfork \overline{B}_i$ is also a suspension of a group $G_i$ of orientation-preserving homeomorphisms of the interval over the compact $(n-1)$--manifold $\Sigma_i$, which is the boundary of the base manifold $\overline{S}_i$. We use the notation $\Sigma_i\times_{G_i} I$ to denote the trace of the foliation on $\partial^\pitchfork \overline{B}_i$, where $I=[0,1]$ and $G_i$ is a subgroup of $\Homeo_+(I)$. Since the leaves of $\FF_{|\Omega_1}$ are proper without holonomy, the same is true for the leaves of $\Sigma_i\times_{G_i} I$. It follows that the total holonomy group $G_i$ must be either trivial or isomorphic to $\Z$. 
\end{notation}

\begin{definition}
Let $L$ be a noncompact manifold and let $P\subset L$ be a proper closed subset. An end $\mathbf{e}\in\xi(L)$ is said to be an {\em end of} $L$ {\em relative to} $P$, if every neighborhood of $\mathbf{e}$ in $L$ meets $P$. Let $\xi(P;L)$ be the set of ends of $L$ relative to $P$.
\end{definition}

\begin{lemma}\label{endsfinite}
If the hypothesis of Theorem \ref{t:nonleaf criterion} holds, then
 the set $\xi(\partial^\pitchfork K\cap L;L)$ is finite for all $L\in\FF_\Omega$.
\end{lemma}

\begin{proof}[Proof of Theorem~\ref{t:nonleaf criterion}]
By Lemma \ref{endsfinite} (which will be proven below) the set $\xi(\partial^\pitchfork K\cap L;L)$ is finite. Thus, since $S$ is infinite, there must exist an end in $S$ (in fact infinitely many) and a neighborhood $N$ of that end that does not meet $\partial^\pitchfork K\cap L$. Since $L\setminus (E^+\cup E^-)$ is compact by Proposition~\ref{p:monodromy}, there must exist a codimension one connected closed submanifold $\Sigma\subset N$ that  separates ends in $S$ and does not meet $L\setminus (E^+\cup E^-)$. Then $\Sigma\subset E^+\cup E^-$, so it is completely contained in $E^+$ or in $E^-$ (otherwise it would meet $\partial^\pitchfork K\cap L$), but this contradicts Lemma~\ref{l:No Exit Region}.
\end{proof}

\begin{lemma}\label{l:relativeends}Suppose that $P$ is a noncompact closed subset of a noncompact manifold $L$ such that each connected component of $P$ is compact and open in $P$ (i.e. isolated as a connected component). If there exists a proper path $\gamma:[0,\infty)\to L$  that  meets every component of $P$, then $\xi(P;L)= 1$.
\end{lemma} 
\begin{proof}
Suppose that there exists an end $\mathbf{e}\in \xi(L)$ relative to $P$ that is different from the end $\mathbf{e}_\gamma$ determined by the proper path $\gamma$.
Let $R$ be a compact region in $L$ such that $\mathbf{e}$ and $\mathbf{e}_\gamma$ are ends of distinct components of $L\setminus R$. Then infinitely many distinct components $P_i$, $i\in {\mathbb N}$, of $P$ must meet $R$. If we choose $x_i\in P_i\cap R$ for each $i$
then there will be a subsequence of $\{x_i\}$ that converges to a point of $P\cap R$,
 but that is impossible since the connected components of $P$ are isolated.
\end{proof}

\begin{proof}[Proof of Lemma \ref{endsfinite}]
Note that $\xi(\partial K\cap L;L)=\bigcup_{i}\xi(\partial \overline{B}_i\cap L;L)$. Since this union is finite, it suffices to show that each $\xi(\partial\overline{B}_i\cap L;L)$ is finite. We divide the proof into various cases.

Case 1:  $G_i$ (defined in Notation \ref{n:good_octopus}) is isomorphic to $\Z$. It follows that the interior leaves of $\Sigma_i\times_{G_i} I$ are cyclic covering spaces of $\Sigma_i$, so the leaves in $L\cap \partial\overline{B}_i$ are open $(n-1)$--submanifolds with exactly two ends. They are proper submanifolds of $L$ by construction. Observe also that the generator of each $G_i$ cannot have fixed points in the interior of $I$, in view of the product structure on a neighborhood of each leaf. Thus its fundamental domain is a closed interval and each component of $L\cap \partial \overline{B}_i$ meets that interval in exactly one point (or two for boundary points of the interval). The product structure near each leaf also implies that $L$ meets this interval in at most finitely many points, so $L\cap \partial \overline{B}_i$ consists of finitely many proper submanifolds, each one with two ends. These are the only ends in $\xi(\partial \overline{B}_i\cap L;L)$, so  $\xi(\partial \overline{B}_i\cap L;L)$ is finite in this case.

Case 2: $G_i$ is trivial. In this case the suspension is foliated as a product $\Sigma_i\times I$. Let $L_t$ be the leaf of $\FF_{|\Omega_1}$ which contains $\Sigma_i\times\{t\}$ and for a fixed $t'\in (0,1)$ let $L = L_{t'}$. It follows from the definition of the monodromy that $L\cap\partial\overline{B}_i=\bigsqcup_{k\in\Z}m^k(\Sigma_i\times\{t'\})$.
Let $X$ be a connected component of $L\cap \overline{B}_i$. 

Case 2a: $X$ is bounded by at least two hypersurfaces of the form $m^r(\Sigma_i\times\{t'\})$, $r\in\Z$. Without loss of generality assume that $\Sigma_i\times\{t'\}$ and $m^{k_i}(\Sigma_i\times\{t'\})$ with $k_i>0$ are boundary components of $X$, and that $m^{\ell}(\Sigma_i\times\{t'\})$ does not meet $\partial X$ for $0<\ell<k_i$.
Let $\gamma$ be a path in $X$ joining some $x\in\Sigma_i\times\{t'\}$ to $m^{k_i}(x)\in m^{k_i}(\Sigma_i\times\{t'\})$. Then the union of the paths $m^{k\cdot k_i}\circ \gamma$, $k\in\Z$, defines a proper biinfinite path in $X$ that meets every component of $\bigsqcup_{k\in\Z}m^{k\cdot k_i}(\Sigma_i\times\{t'\})$.  
It follows from Lemma~\ref{l:relativeends} that $\xi\left(\bigsqcup_{k\in\Z}m^{k\cdot k_i}(\Sigma_i\times\{t'\});L_t\right)\leq 2$ so $\xi(\partial \overline{B}_i\cap L;L)=\xi\left(\bigsqcup_{k\in\Z}m^k(\Sigma_i\times\{t'\});L_t\right)\leq 2k_i$ and therefore is finite.

Case 2b: $\overline{X}$ is a connected component of $\overline{B}_i\cap L$ bounded by just one hypersurface $\Sigma_i\times\{t'\}$. Of course, in this case, every other connected component of $\overline{B}_i\cap L$ is also bounded by one such hypersurface. Note that the ends of the connected components of $\overline{B}_i\cap L$ do not belong to $\xi(\partial \overline{B}_i\cap L;L)$.

We shall use notation from the proof of Lemma~\ref{l:good block}  which we recall now. Let $C_1,\dots,C_{n_0}$ be the connected components of $\partial^\tau K$ and for each $1\leq \ell \leq n_0$ let $U_\ell$ be a small tubular neighborhood of $C_\ell$ in $K$ such that $\partial \overline{U}_\ell$ is transverse to $L$, each $\overline{U}_\ell$ is contained in $E^+$ or $E^-$ and $\overline{U}_j\cap \overline{U}_\ell=\emptyset$ for $j\neq l$.

As in Proposition~\ref{p:monodromy}, let us consider $C=L\setminus (\bigcup_j B_j\cup\bigcup_\ell U_\ell)$, which is a codimension zero compact submanifold of $L$ (with boundary and corners). Since $\overline{U}_j\cap \overline{U}_\ell=\emptyset$ for $j\neq \ell$, any path contained in $L\setminus C$ joining a point from $U_j$ to a point in $U_\ell$ needs to pass through an arm.
All but finitely many components of $\partial \overline{B}_i\cap L$ must be contained in $\bigcup_{\ell=1}^{n_0} U_\ell$. Consequently $\xi(\partial \overline{B}_i\cap L;L)=\bigcup_\ell\xi(\overline{U}_\ell\cap\partial \overline{B}_i\cap L ;L)$. (Also note that for a fixed $i$ only two intersections $\overline{U}_\ell\cap\partial \overline{B}_i\cap L$ are nonempty).

Let $\Sigma_i\times\{t'\}$ be a boundary component of $\partial \overline{B}_i\cap L$ contained in $U_{\ell_0}$ for some $1\leq \ell_0\leq n_0$ and assume that $\overline{U}_{\ell_0}\subset E^+$.

\noindent {\bf Claim.} For every $x\in\Sigma\times\{t'\}$ there must exist a path contained in $E^+\cap L$ joining the point $x\in\Sigma_i\times\{t'\}$ to $m^k(x)\in m^k(\Sigma_i\times\{t'\})$ for some $k>0$.

Let $B_\diamond$ be the union of the arms $B_j$ such that $G_j$ is trivial and each connected component of $\overline{B}_j\cap L$ has a connected and compact boundary. Let $V_{\ell_0}$ be the union of $U_{\ell_0}\cap L$ with the connected components of $B_\diamond\cap L$ whose closures meet $\overline{U}_{\ell_0}$, so it is clear that $\overline{V}_{\ell_0}\subset E^+\cap L$. Observe that by construction each connected component of $\partial \overline{V}_{\ell_0}$ meets $\partial C$ or $\partial \overline{B}_j$ for some $j$ such that either $G_j$ is infinite cyclic or else some (and consequently every) connected component of $\overline{B}_j\cap L$ is bounded by at least two compact boundary components of the form $\Sigma_j\times\{t\}$.

Let $\gamma_k:[0,1]\to L$ be a path joining $m^k(x)\in m^k(\Sigma_i\times\{t'\})$ to $x\in \Sigma_i\times\{t'\}$ for each $k>0$. Assume that all of them meet $L\setminus E^+$ and let $t_k\in (0,1)$ be the minimum value such that $\gamma_k(t_k)\notin V_{\ell_0}$; the existence of $t_k$ is clear since $V_{\ell_0}$ is open and $x\in V_{\ell_0}\subset E^+$.
Suppose that for infinitely many $k$'s we have $\gamma_k(t_k)\in\partial C$. Note that $\gamma_k(t_k)\in\partial \overline{V}_{\ell_0}$ and any component of $\partial \overline{V}_{\ell_0}$ is contained in $E^+\cap L$. 
Now $\partial \overline{V}_{\ell_0}\cap\partial C$ has finitely many connected components, since only finitely many of them meet $\partial \overline{U}_{\ell_0}\cap L$ (by transversality of $\partial \overline{U}_{\ell_0}$ to $L$) and the others are 
boundary components of $C$, which is compact.
Thus, there must exist two paths $\gamma_{k_1}$ and $\gamma_{k_2}$ with $k_1<k_2$  such that $\gamma_{k_1}(t_{k_1})$ and $\gamma_{k_2}(t_{k_2})$ belong to the same component of $\partial \overline{V}_{\ell_0}\cap \partial C$. Let $\lambda$ be a path in that boundary component joining $\gamma_{k_1}(t_{k_1})$ and $\gamma_{k_2}(t_{k_2})$. Let $\gamma_{k_1, k_2}$ be the union of the paths $\gamma_{k_1|[0,t_{k_1}]},\lambda,\gamma^{-1}_{k_2|[-t_{k_2},0]}$ (here $\gamma^{-1}(t)=\gamma(-t)$). This path is clearly contained in $E^+\cap L$ and joins $m^{k_1}(\Sigma_i\times\{t\})$ to $m^{k_2}(\Sigma_i\times\{t\})$. By composing with $m^{-k_1}$ we obtain the desired path.

Otherwise, if for all but finitely many $k$'s we have $\gamma_k(t_k)\notin \partial C$, then, by the construction of $V_{\ell_0}$, there must exist an arm $B_j$ such that $\gamma_k(t_k)\in \overline{B}_j$ for infinitely many $k$'s in $\N$. There are three cases to consider:

\begin{itemize}
\item $G_j\simeq \Z$. In this case, by the same reason as in the first case of this proof, $\partial \overline{B}_j\cap L$ consists of finitely many connected two-ended submanifolds, so there must exist two paths meeting the same component of $\partial \overline{B}_j\cap L$, and then we can construct a path $\gamma_{k_1,k_2}$ in $\overline{U}_{\ell_0}\cap L\subset E^+\cap L$ joining $m^{k_1}(x)$ and $m^{k_2}(x)$ as above for suitable $k_1<k_2$. Composing with $m^{-k_1}$ we get the desired path.

\item $G_j$ is trivial but $L\cap \overline{B}_j$ still consists of finitely many components (this is what happens in Case 2a). In this case there must exist $k_1<k_2$ such that $\gamma_{k_1}(t_{k_1})$ and $\gamma_{k_2}(t_{k_2})$ belong to the same connected component of $L\cap \overline{B}_j$ but possibly to different components of $L\cap\partial \overline{B}_j$. In this case consider a path $\lambda:[0,1]\to \overline{B}_j\cap L$ joining $\gamma_{k_1}(t_{k_1})$ to $\gamma_{k_2}(t_{k_2})$. The union of the paths $\gamma_{|[0,t_{k_1}]}$, $\lambda$, and $\gamma_{|[-t_{k_2},0]}^{-1}$ is a path joining $m^{k_1}(x)$ to $m^{k_2}(x)$ and contained in $(\overline{B}_i\cup\overline{U}_{\ell_0}\cup \overline{B}_j)\cap L$  and therefore is contained in $E^+\cap L$.

\item $G_j$ is trivial and every connected component of $\overline{B}_j\cap L$ is bounded by just one component of the form $\Sigma_j\times\{s\}$. 
In this case $B_j\subset B_\diamond$ so $\gamma_k$ must exit $\overline{B}_j$ at $\gamma_k(t_k)$ and therefore $\gamma_k(t_k)\in\partial C$, but this was assumed to happen for finitely many $\gamma_k$'s.
The Claim is proven. 
\end{itemize}

Thus there must exist a  path $\gamma_{k_0}$ which is completely contained in $E^+\cap L$ and joins $x\in\Sigma_i\times\{t'\}$ to $m^{k_0}(x)\in m^{k_0}(\Sigma_i\times\{t'\})$. The union of the paths $m^k\circ \gamma_{k_0}$, for $k\in\N$ defines a semiinfinite proper path meeting the hypersurfaces $m^{k\cdot k_0}(\Sigma_i\times\{t'\})$, $k\in\N$.
Set $P=\bigsqcup_l m^{k\cdot k_0}(\Sigma_i\times\{t'\})$. By Lemma~\ref{l:relativeends} we have that $\xi(P;L)=1$. It readily follows that $\xi\left(U_{\ell_0}\cap\partial \overline{B}_i\cap L;L\right)= \bigcup_{k=0}^{k_0-1}\xi(m^k(P);L)\leq r_0$ and must be finite. A similar argument works if $\overline{U}_{\ell_0}\subset E^-$. 

This finishes the proof of Lemma \ref{endsfinite} and the proof of Theorem \ref{t:nonleaf criterion} is complete.
\end{proof}

\begin{figure}\label{f:transverse boundary nucleus}
\centering
\includegraphics[scale=0.35]{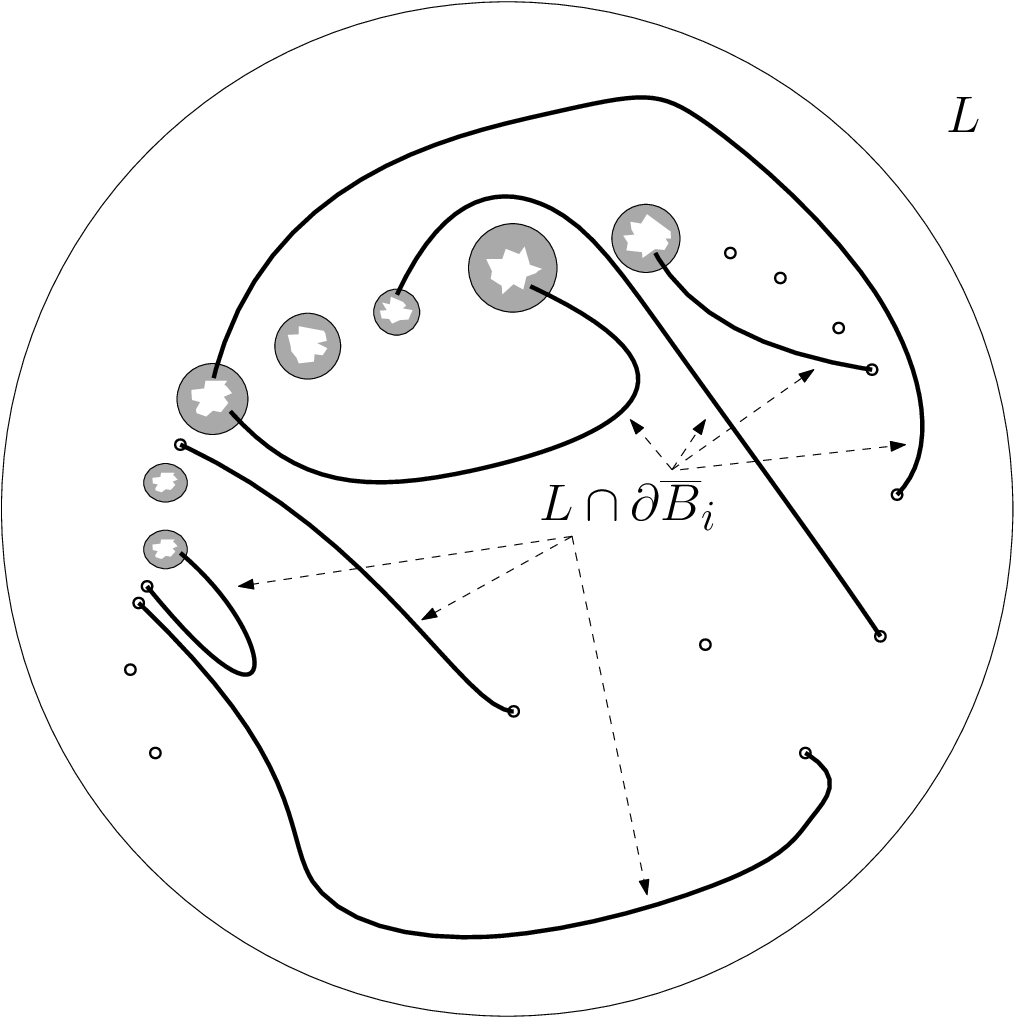}
\includegraphics[scale=0.35]{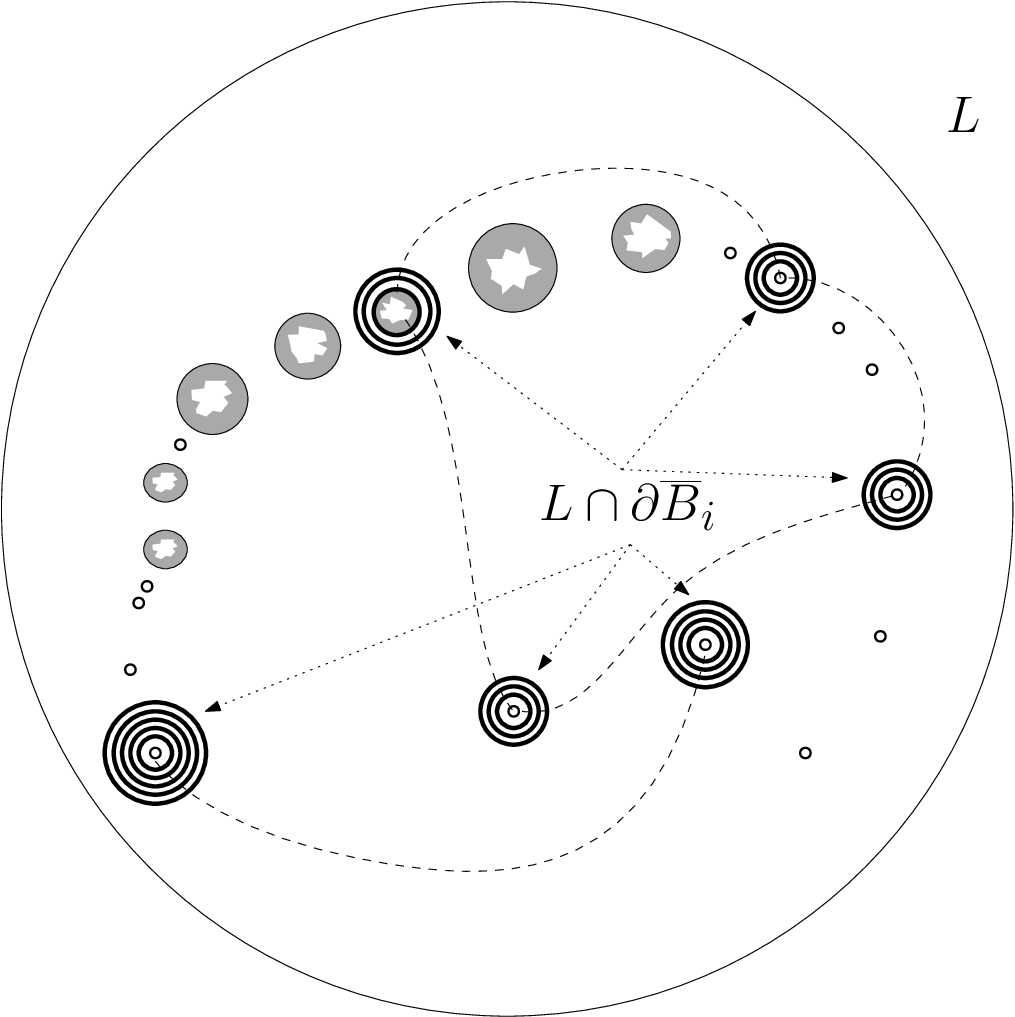}
\caption{A representation of the relative position of $L$ and $\partial \overline{B}_i$, grey circles represent nonperiodic ends, small circles represent other ends. Left: Case $1$ in the proof of Lemma~\ref{endsfinite}, bold lines represent the intersection of $L$ with $\partial \overline{B}_i$, they represent proper submanifolds with two ends. Right: Case $2$ in the same proof, here dashed lines represent proper paths meeting the connected components of $\partial\overline{B}_i\cap L$ which are compact in this case (concentrical bold circles, the centers represent the relative ends). In any case some of the nonperiodic ends, being infinitely many, must avoid $\partial \overline{B}_i\cap L$.}
\end{figure}

\begin{remark}
In \cite{Ghys1} and \cite{JAP}, families of nonleaves obtained as connected sums of lens spaces patterned on an infinite graph are introduced. Each vertex represents a lens space with a fundamental group of prime order and no lens space is repeated in any other vertex. When the graph has infinitely many ends, such a manifold satisfies the conditions of Theorem~\ref{t:nonleaf criterion} ($1$-rigidity is explicitly shown in these papers), thus providing an alternative proof that these are $C^{1,0}$-nonleaves.
\end{remark}

\begin{remark}\label{r:double cover}
Observe that if a leaf of a codimension one foliation is simply connected then there exists a diffeomorphic (homeomorphic) copy of it in the transversely orientable double cover. Thus Theorems~\ref{t:C2 nonleaf criterion} and \ref{t:nonleaf criterion} also apply to simply connected leaves of non transversely oriented foliations.
\end{remark}

\begin{remark}
Topological versions of periodicity and $1$-rigidity can be easily defined in order to study nonleaves of $C^0$ foliations. All the results given in this section (and also in Section \ref{s:Yinfty}) for $C^{1,0}$ codimension one foliations can be directly adapted  to $C^0$ ones, by just changing the words ``smooth'' and ``diffeomorphism'' to ``topological'' and ``homeomorphism''. Given the nature of our next examples, it seems more natural to work only in the smooth category.
\end{remark}

\section{Exotic structures on $\R^4$}   

\subsection{Background} 
We refer to our previous paper \cite{Menino-Schweitzer} for a more detailed background on the necessary tools of $4$--dimensional topology that are needed here. We also refer to the book of R. Gompf and A.I. Stipcsicz \cite{Gompf2} for an extensive guide through the $4$--dimensional world.

According to Freedman's famous theorem \cite{Freedman}, the topology of simply connected closed $4$--manifolds is characterized by their intersection form and their Kirby--Siebemann invariant. For each symmetric bilinear unimodular form over the integers there exists at least one topological simply connected closed $4$--manifold with an isomorphic intersection form \cite{Freedman}, but this is no longer true for the smooth case.

\begin{theorem}[Donaldson \cite{Donaldson}]\label{t:Donaldson}
If $M$ is a closed simply connected smooth $4$--manifold with definite intersection form then its intersection form is isomorphic over the integers to a diagonal form.
\end{theorem}

Since the number of isomorphism classes of definite forms grows at least exponentially with the range, Donaldson's Theorem shows there are many topological simply connected $4$--dimensional manifolds that are not smoothable. On the other hand, noncompact $4$--manifolds are always smoothable \cite{Freedman-Quinn}. There exists a version of Donaldson's Theorem for noncompact smoothly periodic $4$-manifolds due to Taubes \cite{Taubes}. We shall use the following version of Taubes' Theorem which suffices for our purposes.

\begin{theorem}[Taubes \cite{Taubes}] \label{t:Taubes}
Let $M$ be an open smooth simply connected one--ended $4$--manifold with definite intersection form and a smoothly periodic and cylindrical end. Then the intersection pairing on $H_2(M,\Z)$ is isomorphic (over $\Z)$ to a diagonal form.
\end{theorem} 

Here {\em cylindrical} means that the given end admits admits a neighborhood homeomorphic to $S^3\times [0,\infty)$. In the following, if $W$ is a smooth oriented manifold, then $\overline{W}$ will denote the smooth manifold obtained by reversing orientation in $W$.

\begin{definition}[End-sum, see e.g. \cite{Gompf2}]
Let $M$ and $N$ be simply connected oriented open smooth $4$--manifolds with just one end. Choose two smooth properly embedded paths $c_1: [0,\infty)\to M$ and $c_2: [0,\infty)\to N$. Let $V_1$ and $V_2$ be smooth tubular neighborhoods of $c_1([0,\infty))$ and $c_2([0,\infty))$. The boundaries of these neighborhoods are clearly diffeomorphic to $\R^3$ and so we can form a smooth sum by identifying $M\setminus \mathring{V}_1$ and $N\setminus \mathring{V}_2$ along these boundaries so as to produce a manifold with an orientation respecting the orientations of $M$ and $N$. This is called the {\em end--sum} of $M$ and $N$ and it is denoted by $M\natural N = (M\setminus \mathring{V}_1)\bigcup_\partial (N\setminus \mathring{V}_2)$. If $M$ and $N$ are also simply connected at infinity (for example, if the ends are cylindrical), then $c_1$ and $c_2$ are unique up to ambient isotopy and thus the smooth structure of $M\natural N$ does not depend on the choice of the paths.
\end{definition}

\subsection{Large exotica}  

\begin{definition}\cite[Definition 1.11]{Menino-Schweitzer}
Let $\MM_-$ (resp. $\MM_+$) be the the family of smoothings of closed topological $4$--manifolds, $M$, with exactly one puncture so that there exists $s\in\N$ (the positive integers) such that the $s$--fold end--sum $s\natural M\equiv\natural_{i=1}^s M$ is end--diffeomorphic to a smoothing of a once punctured topological simply connected negative (resp. positive) definite but not diagonal $4$--manifold. Set $\MM=\MM_-\cup\MM_+$. These manifolds and their ends are called {\em Taubes-like\/}. Let $\mathcal{S}\subset\MM$ be the subset of Taubes-like manifolds homeomorphic to $\R^4$.	\label{d:Taubes-like}
\end{definition}

The following is an easy but useful condition to show that a manifold is Taubes--like.

\begin{lemma}\cite[Lemma 1.14]{Menino-Schweitzer}\label{l:extension}
Let $Z$ and $Y$ be two smooth $4$--manifolds end--homeomorphic to $\R^4$ so that $Z\subset Y$ and $Y\setminus Z$ is homeomorphic to $S^3\times [0,\infty)$ with topologically bicollared boundary. If $Z\in\MM$ then $Y\in\MM$.
\end{lemma}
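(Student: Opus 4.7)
The plan is to verify $Y\in\MM$ using the same integer $s$ and closed topological $4$--manifold $M$ that witness $Z\in\MM$. By hypothesis there is a smoothing $\widetilde{M}$ of a once-punctured $M$ and a diffeomorphism $\phi:U_Z\to V_Z$ between open end-neighborhoods $U_Z\subset s\natural Z$ and $V_Z\subset\widetilde{M}$. I shall construct a new smoothing $\widetilde{M}_Y$ of the same once-punctured $M$ that is end-diffeomorphic to $s\natural Y$, by attaching to $\widetilde{M}$ the topological collar that separates $s\natural Z$ from the end of $s\natural Y$.

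First I would realize $s\natural Z$ naturally inside $s\natural Y$ as an open codimension-zero smooth submanifold whose complement is a bicollared topological collar. Choose a proper smooth ray $c_i^Y$ in each copy of $Y$ that starts inside the corresponding copy $Z_i$, crosses the bicollared $S^3$ into the topological collar $Y_i\setminus Z_i$, and continues to infinity. Its restriction $\tilde c_i^Z$ to $Z_i$, suitably reparametrized, is a proper smooth ray in $Z_i$ because it eventually leaves every compact subset of $Z_i$ as it approaches the bicollared $S^3$. Pick smooth tubular neighborhoods $V_i^Y$ of $c_i^Y$ thin enough that $V_i^Z:=V_i^Y\cap Z_i$ is a smooth tubular neighborhood of $\tilde c_i^Z$ in $Z_i$ with $\partial V_i^Z=\partial V_i^Y\cap Z_i\cong\R^3$. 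Forming $s\natural Y$ and $s\natural Z$ using compatible gluings of the $\partial V_i^Y$'s that restrict to gluings of the $\partial V_i^Z$'s then embeds $s\natural Z$ as an open submanifold of $s\natural Y$.

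The complement $(s\natural Y)\setminus(s\natural Z)$ is the union of the pieces $(Y_i\setminus Z_i)\setminus V_i^Y$, each homeomorphic to $D^3\times[0,\infty)$ since removing a half-tube $D^3\times[0,\infty)$ from the collar $S^3\times[0,\infty)$ leaves $(S^3\setminus\mathring D^3)\times[0,\infty)$. These pieces are glued along $S^2\times[0,\infty)$'s inside the collars by the end-sum tube identifications. Using $D^3\cup_{S^2}D^3\cong S^3$ and induction on $s$ following the tree structure of iterated end-sum, $(s\natural Y)\setminus(s\natural Z)\cong S^3\times[0,\infty)$ topologically, and the individual bicollars of the bicollared $S^3$'s in each $Y_i$ assemble to a bicollar of the resulting boundary.

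To finish, shrink $U_Z$ so it lies in the inner half of this bicollar; its complement in $s\natural Z$ remains compact. Set $U_Y:=U_Z\cup((s\natural Y)\setminus(s\natural Z))$, an open end-neighborhood of $s\natural Y$. Define $\widetilde{M}_Y$ as the smooth pushout $\widetilde{M}\cup_\phi U_Y$ identifying $V_Z\subset\widetilde{M}$ with $U_Z\subset U_Y$ via $\phi$; this is a smooth manifold since the gluing is along a diffeomorphism of open sets. Topologically $\widetilde{M}_Y$ is $\widetilde{M}$ with a topological collar attached at its cylindrical end, preserving the topological type of a once-punctured $M$, so $\widetilde{M}_Y$ is a smoothing of $M$ punctured once. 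By construction $\widetilde{M}_Y$ contains an open subset diffeomorphic to $U_Y\subset s\natural Y$, which is an end-neighborhood of both, so $s\natural Y$ is end-diffeomorphic to $\widetilde{M}_Y$, giving $Y\in\MM$. The main obstacle is that the $S^3$ separating $Z$ from $Y\setminus Z$ is only topologically bicollared, so the tubes $V_i^Y$ must be chosen delicately: they should respect the topological product structure of the bicollar near $S^3$ so that each $V_i^Y\cap Z_i$ is a genuine smooth tubular neighborhood of $\tilde c_i^Z$ with $\partial V_i^Z\cong\R^3$.
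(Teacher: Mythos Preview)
The paper does not give its own proof of this lemma; it is quoted from the authors' earlier article \cite{Menino-Schweitzer} (Lemma~1.14 there), so there is no in-paper argument to compare with.  Your argument is the natural one and is correct in outline: realize $s\natural Z$ inside $s\natural Y$ with topological collar complement, then push that collar across the given end--diffeomorphism $s\natural Z\simeq_{\mathrm{end}}\widetilde M$ to produce a new smoothing $\widetilde M_Y$ of the same once-punctured definite manifold that is end--diffeomorphic to $s\natural Y$.

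The one step that genuinely needs more than you have written is precisely the one you flag at the end.  Because the separating $S^3$ is only \emph{topologically} bicollared, a generic smooth proper ray $c_i^Y$ need not meet $Z_i$ in a single initial interval $[0,t_0)$, and a smooth tube around it need not restrict to a half-tube in $Z_i$; your sentence ``they should respect the topological product structure of the bicollar'' does not by itself explain why such a choice is possible.  The fix is the \emph{smooth patch} already invoked in this paper (Notation~\ref{n:good_set}, via \cite{Quinn}): one may modify the bicollar to be smooth in a neighborhood of some point of the $S^3$, and then run $c_i^Y$ and its tubular neighborhood through that patch, where the intersection with $Z_i$ is visibly $[0,t_0)\times D^3$ and $\partial V_i^Z\cong\R^3$.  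With that adjustment your construction goes through.  One trivial point you leave implicit: since attaching a topological collar does not change the homeomorphism type, $Y$ is homeomorphic to $Z$ and hence is itself a smoothing of a once-punctured closed $4$--manifold, so it is eligible for membership in $\MM$.
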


\begin{notation}\label{n:good_set}
Let $\mathbf{e}$ be a cylindrical end of a smooth manifold $Z$ and let $X$ be a neighborhood of that end with a homeomorphism $\psi=\psi_\mathbf{e}: X\to S^3\times[0,\infty)$. Let us consider the sets $\mathbf{C}_t=\psi^{-1}([0,t])$ and $\mathbf{K}_t=(M\setminus X)\cup\mathbf{C}_t$ for every $t>0$, and the sets $\mathbf{C}(t;\varepsilon)=\psi^{-1}(S^3\times (t-\varepsilon,t+\varepsilon))$ for $0< \varepsilon<t$, each endowed with the smooth structures induced by the ambient manifold $Z$.
	
Note that the boundary of each $\mathbf{K}_t$ does not need to be smooth, but the homeomorphism $\psi$ can be chosen so that every $\mathbf{K}_t$ is smooth in a neighborhood of a point in its boundary \cite{Quinn}; these will be called {\em admissible} homeomorphisms. In order to avoid confusion, we can use the notations $\mathbf{K}_t^\psi, \mathbf{C}^\psi_t$ whenever the underlying  homeomorphism is not clear from the context.
\end{notation}

The next proposition is an easy corollary of Taubes' Theorem (Theorem~\ref{t:Taubes}).

\begin{proposition}[\cite{Menino-Schweitzer}, Remark 1.16]\label{p:heritage}
Given $Z\in\MM$, let $X$ be a neighborhood of its end with a homeomorphism $\psi: X\to S^3\times [0,\infty)$. Then there exists $r_\psi>0$ such that, for any $t>s>r_\psi$, $\mathring{\mathbf{K}}_t$ is not end--diffeomorphic (preserving the orientation) to $\mathring{\mathbf{K}}_s$.
\end{proposition}

\begin{remark}\label{r:heritage}
Using Lemma \ref{l:extension}, we choose $r_\psi$ so that $\mathring{\mathbf{K}}_t\in\MM$ for all $t>r_\psi$.
Also note that this proposition shows that the underlying topological manifold of any $Z\in \MM$ has a continuum of non-diffeomorphic smooth structures.
\end{remark}

\begin{proposition}
Take $Z\in\MM$ and $\psi$ as in Notation~\ref{n:good_set}. For $t>r_\psi$, $\mathring{\mathbf{C}}_t$ cannot be smoothly embedded in any smooth manifold $S$ homeomorphic to $S^4$.  Moreover, if $Z\in\MM_+$ (resp. $Z\in\MM_-$) then $Z$ cannot be smoothly embedded in any smooth manifold homeomorphic to $k\#\C P^2$ (resp. $k\#\overline{\C P}^2$) for any $k\in\N$.
\label{p:no embedding}
\end{proposition}

\begin{proof} 
Since $Z\in\MM$, there exists $s\in\N$ such that $s\natural Z$ is end--diffeomorphic to a smoothing of a once punctured definite but non--diagonal simply connected manifold $N$. By Remark~\ref{r:heritage}, for all $t>r_\psi$, $N_t:=s\natural\mathring{\mathbf{K}}_t$ is still end--diffeomorphic to another smoothing of a once punctured definite manifold. If a neighborhood of the end of $\mathring{\mathbf{K}}_t$ can be smoothly embedded in $S$ then a neighborhood $X$ of the end of $N_t$ can be smoothly embedded in $s\#S$, which is still homeomorphic to $S^4$. Choose $X$ to be homeomorphic to $S^3\times[0,\infty)$ and let $i: X\to s\# S$ be the above smooth embedding in $s\#S$. Let $A$ be the closed connected component of $(s\#S)\setminus i(X)$ and consider the smooth manifold $W=N_t\cup_i(i(X)\cup A)$ obtained by the identification $i: X\to i(X)$. This is a simply connected smooth closed $4$--manifold with definite non--diagonalizable intersection form, but this contradicts Donaldson's Theorem. A similar argument applies when $Z\in\MM_-$ (resp. $\MM_+$); in this case $s\# S$ will be replaced by the $s$-fold connected sum of smooth manifolds homeomorphic to $k\#\C P^2$ (resp. $k\#\overline{\C P}^2$) and therefore $W$ still has positive (resp. negative) definite and not diagonalizable form.
\end{proof}

\begin{remark}\label{r:no S4 - CP2 maybe}
As a consequence of the above proposition, no Taubes-like exotic $\R^4$ can be embedded in $S^4$. An exotic $\R^4$ with this property is called {\em large\/}. However, there exist Taubes-like exotica in $\MM_-$ that can be smoothly embedded in $\C P^2$ (see \cite[Example 5.10]{Taylor1}).
\end{remark}

\begin{corollary}\label{c:no embedding2}
Take $\mathbf{R}\in\mathcal{S}$ and $\psi$ as in Notation~\ref{n:good_set}. Then for $t>s>r_\psi$, $\mathbf{C}(t;\varepsilon)$ cannot be smoothly embedded (preserving the orientation) in $\mathring{\mathbf{C}}_s$ for any $\varepsilon>0$. \label{Csub}
\end{corollary}
\begin{proof}
Let $j$ be a smooth embedding of $\mathbf{C}(t;\varepsilon)$ into $\mathbf{C}_s$. In any case, $\psi^{-1}(S^3\times\{t+\varepsilon\})$ must be facing toward the unbounded component of $\mathbf{C}_s\setminus j(\mathbf{C}(t;\varepsilon))$, for otherwise $\mathbf{C}(t;\varepsilon)$ could be embedded in (a possibly exotic) $S^4$ since the two boundaries would be bounded by possibly exotic disks, in contradiction to Proposition~\ref{p:no embedding}.  Let $E$ be the closure of the connected component of $\mathbf{C}_{t+\varepsilon}\setminus j(\mathbf{C}(t;\varepsilon))$ located between these two tubes. If $j(\mathbf{C}(t;\varepsilon))$ disconnects the boundary components of $\mathbf{C}_{t+\varepsilon}$ then $E$ has exactly two boundary components and the manifold $$\mathbf{K}_0\cup_\partial j(\mathbf{C}(t;\varepsilon))\cup_\partial E \cup_\partial j(\mathbf{C}(t;\varepsilon))\cup_\partial E \cup_\partial\cdots$$ will be a Taubes-like manifold with a smoothly periodic end, contradicting Theorem~\ref{t:Taubes}.

If $E$ contains $\psi^{-1}(S^3\times\{0\})$ as a boundary component then consider $\mathbf{E}=E\cup_\partial \mathbf{K}_{0}$, which can be used again to produce a periodic segment on a Taubes-like manifold, also giving a contradiction.
\end{proof}

\begin{remark}\label{r:no embedding2}
With the same hypotheses as above, assume that $\Sigma$ is a topologically bicollared sphere in $\mathbf{C}_t\setminus\mathbf{C}_s$ that is homologous to $\psi^{-1}(S^3\times\{0\})$ and let $U$ be a tubular open neighborhood of $\Sigma$ in $\mathbf{C}_t$. Then, by the argument  in Corollary~\ref{c:no embedding2}, $U$ cannot be embedded in $\mathbf{C}_s$. 
\end{remark}

It is said that a closed $4$-manifold has {\em hyperbolic intersection form} if its intersection form is isomorphic to a direct sum of copies of
$$H=\left[\begin{array}{cc}
0 & 1\\
1 & 0
\end{array}\right]\;.$$

\begin{definition}[Taylor's exotica]\label{d:Taylor exotica}
Let $\mathbf{R}$ be a Taubes-like exotic $\R^4$ and let $X$ be a cylindrical neighborhood of its end. Then $\mathbf{R}$ is {\it Taylor-like} if:
\begin{itemize}
\item $\mathbf{R}$ can be smoothly embedded in some closed smooth spin $4$-manifold with hyperbolic intersection form, and

\item there exist an unbounded nondecreasing sequence $\gamma_n\in\N$, $n\in\N$, depending only on $\mathcal{R}$, an admissible homeomorphism $\psi: X\to S^3\times[0,\infty)$, and $r_\psi>0$ such that  no $k\natural \mathring{\mathbf{K}}^\psi_{t}$ with $t>r_\psi$ can be embedded in any smooth closed spin $4$-manifold with hyperbolic intersection form
and second betti number lower than $\gamma_k$.
\end{itemize}
Let $\mathcal{R}\subset\mathcal{S}$ denote the family of Taylor-like exotic $\R^4$'s.
\end{definition}

\begin{remark}
In \cite[Example 5.6]{Taylor1}, Taylor shows that $\mathcal{R}$ is nonempty. Observe that for every $\mathbf{R}\in\mathcal{R}$, $k\natural\mathbf{R}$ is also Taylor-like for any $k\in\N$ but $\infty\natural \mathbf{R}$ cannot be smoothly embeded in any smooth closed spin $4$-manifold with hyperbolic intersection form. In terms of \cite{Taylor1}, the sequence $\gamma_n=2\gamma(n\natural\mathbf{R})-1$ where $\gamma(E)$ is the invariant defined in that work (also called the Taylor-index in \cite{Menino-Schweitzer}), satisfies the condition in the previous definition. 
Note that  $\mathring{\mathbf{K}}_t$ is also Taylor-like for all $t>r_\psi$.

The definition does not depend on the choice of the admissible homeomorphism $\psi$: let $\varphi$ be another admissible homeomorphism from $X$ to $S^3\times[0,\infty)$ and let $\pi:S^3\times[0,\infty)\to [0,\infty)$ be the projection onto the second factor and choose $r_\varphi$ greater than the maximum of $\pi(\varphi\circ\psi^{-1}(S^3\times\{r_\psi\})$; then it follows that for all $t>r_\varphi$ there exists $s>r_\psi$ so that $\mathring{\mathbf{K}}^\psi_s\subset \mathring{\mathbf{K}}^\varphi_t$.\label{r:Taylor-like}
\end{remark}

\begin{lemma}\cite{Taylor1}\label{l:taylor stimate} 
Let $W$ be a smooth open $4$-manifold (possibly with boundary). Take $\mathbf{R}\in\mathcal{R}$ and let $\psi:X\to S^3\times [0,\infty)$ be  an admissible homeomorphism from a neighborhood $X$ of the end of $\mathbf{R}$ to a standard cylinder. 
Let $U\subset X$ be a closed subset homeomorphic to $S^3\times [0,1]$ with bicollared boundary components and such that the bounded component of $X\setminus U$ contains $\mathbf{K}_t$, for some $t>r_\psi$. Suppose that $W$ contains infinitely many pairwise disjoint smooth copies $U_k\ ( k\in\N)$ of $U$ such that $W\setminus U_k$ has a connected component homeomorphic to a ball. Then $W$ cannot be smoothly embedded in any closed smooth spin $4$-manifold with hyperbolic intersection form.
\end{lemma}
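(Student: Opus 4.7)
The plan is to argue by contradiction using the Taylor-likeness condition of Definition~\ref{d:Taylor exotica}. Suppose that $W$ embeds smoothly in a closed smooth spin $4$-manifold $Z$ with hyperbolic intersection form, and put $n=b_2(Z)$. Since $\gamma_j\to\infty$, we may choose $k\in\N$ with $\gamma_k>n$.

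The first step is to select $k$ of the given copies, say $U_{j_1},\dots,U_{j_k}$, whose ball components $D^-_{j_l}\subset W\setminus U_{j_l}$ are pairwise disjoint. A direct separation analysis (using that in the applications the non-ball side of each $U_j$ has nontrivial topology, so that at most one of the two components of $W\setminus U_j$ is a ball) shows that the family $\{D^-_j\}$ is laminar, i.e., any two members are either disjoint or nested. By a standard combinatorial argument, one may extract either an infinite antichain, which yields the desired pairwise disjoint $D^-$'s directly, or an infinite descending chain; the latter case corresponds to the $U_j$'s accumulating at a single end of $W$, and the required $k$ disjoint exotic insertion regions can still be obtained by working with the annular shells $D^-_{j_l}\setminus \overline{D^-_{j_{l+1}}}$.

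Next, let $V\subset\mathbf{R}$ denote the closed bounded topological $4$-ball cut off by $U$, which inherits from $\mathbf{R}$ a Taubes-like smooth structure, and fix $s>r_\psi$ so that $\mathbf{K}_s\subset\mathring V$. In $Z$, each $\overline{D^-_{j_l}}$ is a topological closed $4$-ball bounded by one smoothly bicollared $3$-sphere face of $U_{j_l}$. I would replace each $\overline{D^-_{j_l}}$ in $Z$ by a fresh smooth copy of $V$, glued along an orientation-preserving diffeomorphism of the boundary $3$-spheres. The resulting closed smooth $4$-manifold $\tilde Z$ is homeomorphic to $Z$, since we have only substituted topological $4$-balls for topological $4$-balls; in particular $b_2(\tilde Z)=n$ and its intersection form is still hyperbolic, and since $w_2$ of a closed $4$-manifold is a topological invariant via Wu's formula, $\tilde Z$ remains spin.

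By construction, $\tilde Z$ contains $k$ pairwise disjoint smooth copies of $\mathring V$, each containing a smooth copy of $\mathring{\mathbf{K}}_s$. Since $\tilde Z$ is connected, I would join these copies by tubular neighborhoods of pairwise disjoint smoothly embedded arcs in the complement, thereby realizing a smooth embedding of the end-sum $k\natural\mathring{\mathbf{K}}_s$ into $\tilde Z$. But then $\tilde Z$ is a closed smooth spin $4$-manifold with hyperbolic intersection form and $b_2(\tilde Z)=n<\gamma_k$ containing $k\natural\mathring{\mathbf{K}}_s$, contradicting the Taylor-likeness of $\mathbf{R}$. The main obstacle in this plan is the first (combinatorial) step, especially the nested subcase; a secondary technical point is realizing the end-sum smoothly inside $\tilde Z$, which requires choosing the connecting arcs compatible with the cylindrical ends of the inserted $\mathring{\mathbf{K}}_s$-copies.
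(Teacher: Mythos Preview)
Your overall architecture is the same as the paper's: embed $W$ in a closed spin $Z$ with hyperbolic form, replace the ball--components $D_k$ (your $D^-_{j_l}$) by copies of the Taubes--like ball $D=V$, obtain $\hat Z\cong Z$ which therefore remains spin with hyperbolic form, and then embed $m\natural\mathring{\mathbf{K}}_t$ (the paper cites \cite[Proposition~2.2]{Taylor1} for this last step, using the smooth patch on $\partial\mathbf{K}_t$), contradicting $\gamma_m>\beta_2(Z)$.

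The substantive divergence is in your first step, and this is where there is a genuine gap. The paper does \emph{not} extract a finite disjoint subfamily combinatorially; it proves that \emph{all} the $D_k$ are automatically pairwise disjoint. The argument is: if some $D_i\subset\mathring D_j$, then the annulus $D_j\setminus\mathring D_i$ has matching boundary collars (this uses that $U$ cannot embed in any smooth $S^4$, which also gives the uniqueness of the ball side without appealing to ``the applications''), so one can form the smoothly periodic manifold
\[
D\cup_\partial (D_j\setminus\mathring D_i)\cup_\partial (D_j\setminus\mathring D_i)\cup_\partial\cdots,
\]
which is Taubes--like by Lemma~\ref{l:extension} and hence forbidden by Taubes' Theorem~\ref{t:Taubes}. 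That is the missing idea that dissolves your ``nested subcase''. Your proposed workaround via the annular shells $D^-_{j_l}\setminus\overline{D^-_{j_{l+1}}}$ does not work as stated: these shells are topological $S^3\times[0,1]$'s, not balls, so the ball--for--ball surgery does not apply to them, and you have not produced any alternative mechanism for inserting $k$ disjoint copies of $\mathring{\mathbf{K}}_s$ from such shells. In short, the paper uses Taubes' periodicity theorem a second time (beyond the definition of $\mathcal{R}$) precisely to eliminate the case you flagged as the main obstacle.
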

\begin{proof}
Assume that $W$ can be smoothly embedded in a closed spin $4$-manifold $M$ with hyperbolic intersection form. For simplicity we shall treat $W$ as a subset of $M$. 

Let $D$ be the union of $U$ with the bounded component of $\mathbf{R}\setminus U$. Let $D_k$ be the union of $U_k$ with the component of $W\setminus U_k$ homeomorphic to a ball (only one of the components can be homeomorphic to a ball for otherwise $U$ would embed in a smooth manifold homeomorphic to $S^4$). 

Moreover, a one-sided tubular neighborhood of $\partial D_k$  in $D_k$ is diffeomorphic to a one-sided tubular neighborhood of $\partial D$ in $D$, for otherwise $U$ would embed in $D_k\cup_U D$ (by a smooth identification preserving the orientation of $U$) and this latter smooth manifold is homeomorphic to $S^4$.

Observe also that $D_i\cap D_j=\emptyset$, for otherwise, if some $D_i\cap D_j\neq\emptyset$ then either $U_i\subset D_j$ and $D_i\subset \mathring{D}_j$ or $U_i\subset W\setminus D_j$ and then $D_j\subset \mathring{D}_i$  (by the disjointness of the $U_k$'s). If it is  the former that holds, then $D_j\setminus \mathring{D}_i$ would be an exotic cylinder with diffeomorphic boundaries that can be used to construct the following periodic smooth manifold (observe that there is just one way to identify boundaries)
$$D\cup_\partial (D_j\setminus \mathring{D}_i)\cup_\partial (D_j\setminus \mathring{D}_i)\cup_\partial\cdots\;,$$ 
but this manifold must be Taubes-like since $D$ contains $\mathbf{K}_t$ for $t>r_\psi$. Thus the $D_k$'s are disjoint.

Let $\gamma_k$ the sequence of integers given in Definition~\ref{d:Taylor exotica} associated to $\mathbf{R}$. Choose $m\in\N$ such that $\gamma_m>\beta_2(M)$. By a smooth surgery, we can replace the disks $D_1,\dots, D_{m}$ by smooth copies of $D$. The effect of this surgery does not change the topology of $M$ but it may change its smooth structure. Let $\hat{M}$ be the smooth manifold obtained by this surgery; then it is still spin with hyperbolic intersection form and same betti numbers since they  depend only on the topology of $M$.

By construction $\mathbf{K}_t\subset D$ and therefore there are $m$ copies of $\mathbf{K}_t$ in $\hat{M}$. But the boundary of $\mathbf{K}_t$ contains a smooth patch and therefore, by \cite[Proposition 2.2]{Taylor1}, $m\natural\mathring{\mathbf{K}}_t$ can be smoothly embedded in $\hat{M}$, but this is a contradiction since $\beta_2(\hat{M})<\gamma_m$.
\end{proof}

\subsection{Small exotica}\label{s:small exotica}$\newline$  

\begin{definition}
An exotic $\R^4$ which can be smoothly embedded in the standard $\R^4$ (or $S^4$) is called {\em small}.
The family of small exotica will be denoted by $\Re$.\end{definition}

\begin{proposition}[\cite{DeMichelis-Freedman}, Corollary 4.1]
There exists an uncountable family of small exotic smooth structures on $\R^4$ which are pairwise non--diffeomorphic. 
\end{proposition}

\begin{corollary}\label{c:good small family}
There exists an uncountable family $\Pi\subset\Re$ of small exotica with the following properties:
\begin{enumerate}
\item no two elements in $\Pi$ are end--diffeomorphic;
and
\item no element in $\Pi$ is smoothly periodic.
\end{enumerate}
\end{corollary}
\begin{proof}
Smoothly periodic manifolds with just one end can be decomposed as $K\cup_\partial N\cup_\partial N\cup_\partial\cdots$ where $K$ is a compact smooth manifold with smooth boundary and $N$ is a compact smooth manifold with two diffeomorphic smooth boundaries (with opposite orientations), one of which coincides with $\partial K$. It is well known that the diffeomorphism classes of compact smooth manifolds with boundary are countable; this is a direct corollary of the finiteness theorems for Riemannian manifolds given by Cheeger in \cite{Cheeger}. Thus, up to diffeomorphism, there are only countably many candidates for $K$ and for $N$. Hence the family of diffeomorphism classes of smoothly periodic exotica is countable.

In a similar way, the set of one--ended smooth manifolds which are end--diffeomorphic to a given one--ended smooth manifold is at most countable. This is explicitly indicated for exotic $\R^4$'s in \cite[Exercise 9.4.13]{Gompf2}, and it follows from the fact that the set of compact codimension zero smooth submanifolds is countable up to isotopy; again, this is a consequence of the finiteness theorems in \cite{Cheeger}.

Since $\Re$ is uncountable it follows that there exists an uncountable subfamily that satisifies both properties.
\end{proof}

\subsection{Nonisolated exotic ends}
Since we are going to deal with manifolds with infinitely many ends we shall need to deal with nonisolated ends. That is the aim of this section.

Let $M$ be a compact manifold and let $\Lambda$ be a closed totally disconnected subset of $M$. Recall that $\Lambda\subset M$ is {\em tame\/} if, for every $\epsilon>0$, $\Lambda$ is contained in the union of finitely many disjoint open balls of radius less than $\epsilon$ \cite[Theorem 1.1]{Bing}.

Another useful condition equivalent to tameness is that there exists an ambient homeomorphism of $M$ which carries $\Lambda$ into a piecewise linear arc in a single chart; it follows that $\Lambda$ can be taken to be a closed subset of the classical ternary Cantor set in the arc, which is identified with the interval $[0,1]$.
If $M$ is noncompact, let $\overline{M}$ be its end compactification. We say that $\Lambda\subset M$ is {\em tame} if $\overline{\Lambda}$ is tame in $\overline{M}$.

\begin{lemma}\label{l:nonisolated smoothings}
Let $\mathbf{R}$ be a Taubes-like exotic $\R^4$'s. Let $\psi$ be an admissible homeomorphism of a neighborhood of its end to $S^3\times [0,\infty)$. Let $t>s>r_\psi$ and let $\Lambda_t\subset\mathring{\mathbf{K}}_t$ and $\Lambda_{s}\subset\mathring{\mathbf{K}}_s$ be closed, totally disconnected and tame subsets accumulating on the ends of $\mathring{\mathbf{K}}_t$ and $\mathring{\mathbf{K}}_s$ respectively. Then the end of $\mathring{\mathbf{K}}_t\setminus\Lambda_t$ induced by $\mathring{\mathbf{K}}_t$ is not end--diffeomorphic to the end of $\mathring{\mathbf{K}}_s\setminus \Lambda_s$ induced by $\mathring{\mathbf{K}}_s$.
\end{lemma}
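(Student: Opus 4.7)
The plan is to argue by contradiction using Remark~\ref{r:no embedding2}. Suppose there is an orientation-preserving diffeomorphism $\phi\colon V_t\to V_s$ between open neighborhoods of the distinguished ends $\mathbf{e}_t\in\mathring{\mathbf{K}}_t\setminus\Lambda_t$ and $\mathbf{e}_s\in\mathring{\mathbf{K}}_s\setminus\Lambda_s$ induced by $\mathring{\mathbf{K}}_t$ and $\mathring{\mathbf{K}}_s$. The aim is to use $\phi$ to produce an orientation-preserving smooth embedding of a tubular open neighborhood $U\subset\mathbf{C}_t$ of a topologically bicollared $3$-sphere $\Sigma\subset\mathbf{C}_t\setminus\mathbf{C}_s$, homologous in $\mathbf{C}_t$ to $\psi^{-1}(S^3\times\{0\})$, into $\mathring{\mathbf{C}}_s\subset\mathbf{C}_s$. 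Since $\mathbf{R}\in\mathcal{S}$ and $t>s>r_\psi$, Remark~\ref{r:no embedding2} forbids such an embedding, giving the contradiction.

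I first shrink $V_t$ so that $\phi(V_t)\subset\mathring{\mathbf{C}}_s$. This is possible because $\mathring{\mathbf{K}}_s\setminus\mathring{\mathbf{C}}_s=(Z\setminus X)\cup\psi^{-1}(S^3\times\{0\})$ is compact in $Z$, so the hypothesis that $\Lambda_s$ accumulates only at $\mathbf{e}_s$ makes $\mathring{\mathbf{C}}_s\setminus\Lambda_s$ a neighborhood of $\mathbf{e}_s$ in $\mathring{\mathbf{K}}_s\setminus\Lambda_s$; its $\phi$-preimage intersected with $V_t$ becomes the new $V_t$. Next I pick $r_0\in(s,t)$ close enough to $t$ that the topological cross-section $\psi^{-1}(S^3\times\{r_0\})$ lies in the shrunken $V_t$, smooth it to a smoothly embedded $3$-sphere by standard smoothing of topologically bicollared codimension-$1$ submanifolds in a smooth $4$-manifold, and then apply a further $C^0$-small ambient isotopy. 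By transversality (available since $\Lambda_t$ is closed and $0$-dimensional, so $\dim\Sigma+\dim\Lambda_t=3<4$) I can arrange the perturbed $\Sigma$ to be disjoint from $\Lambda_t$ while remaining in $V_t\cap(\mathbf{C}_t\setminus\mathbf{C}_s)$. Since the total isotopy is $C^0$-small, $\Sigma$ is ambient isotopic to $\psi^{-1}(S^3\times\{r_0\})$, which is homologous in $\mathbf{C}_t$ to $\psi^{-1}(S^3\times\{0\})$, so $\Sigma$ has the required homology class. A sufficiently narrow smooth bicollar $U$ of $\Sigma$ lying in $V_t\cap(\mathbf{C}_t\setminus\mathbf{C}_s)$ and disjoint from the closed set $\Lambda_t$ provides the desired tubular open neighborhood of $\Sigma$ in $\mathbf{C}_t$.

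The restriction $\phi|_U\colon U\to\mathring{\mathbf{C}}_s$ is then an orientation-preserving smooth embedding of the kind forbidden by Remark~\ref{r:no embedding2}, completing the contradiction. The main technical step is producing $\Sigma$ with all four required properties simultaneously: smoothly embedded, lying in $V_t$, disjoint from $\Lambda_t$, and in the correct homology class. None is difficult individually in dimension $4$, but their compatibility relies on the flexibility of choosing $r_0$ arbitrarily close to $t$ and of performing the smoothing and subsequent transversalization within arbitrarily small $C^0$-neighborhoods of the topological cross-section, so that homology class and placement in $V_t\cap(\mathbf{C}_t\setminus\mathbf{C}_s)$ are both preserved.
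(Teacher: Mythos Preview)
Your approach is correct and, like the paper, derives a contradiction from Corollary~\ref{c:no embedding2}/Remark~\ref{r:no embedding2}, but the route is genuinely different. The paper uses tameness \emph{globally}: it places all of $\Lambda_t$ on a smooth proper ray $p\colon[0,\infty)\to\mathring{\mathbf{K}}_t$, takes a half-space tube $\overline{U}$ around $p$, and invokes the end-sum trick $N_t\setminus\overline{U}\cong N_t$ (removing a standard half-space from a cylindrical end does nothing). The diffeomorphism $f$ then carries a genuine copy of the unpunctured neighborhood $N_t$ into $\mathring{\mathbf{K}}_s$, contradicting Corollary~\ref{c:no embedding2}. Your argument instead uses tameness \emph{locally}: you avoid $\Lambda_t$ only near a single cross-sectional sphere and ship its bicollar through $\phi$. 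The paper's version is cleaner and bypasses any discussion of smoothing or general position; yours is more elementary in that it needs only the weaker Remark~\ref{r:no embedding2} and no end-sum identity.

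Two small points to tighten. First, the sentence ``$\psi^{-1}(S^3\times\{r_0\})$ lies in the shrunken $V_t$'' is not literally true, since $V_t\subset\mathring{\mathbf{K}}_t\setminus\Lambda_t$ and the cross-section may meet $\Lambda_t$; what you actually have (and what suffices) is $\psi^{-1}(S^3\times(r,t))\setminus\Lambda_t\subset V_t$ for some $r<t$, so that once $\Sigma$ is made disjoint from $\Lambda_t$ it automatically sits in $V_t$. Second, the smoothing step is both unnecessary and not obviously ``standard'' in dimension~$4$: Remark~\ref{r:no embedding2} only asks for a topologically bicollared sphere, and your invocation of ``transversality'' against the $0$-dimensional set $\Lambda_t$ is better phrased via tameness---put $\Lambda_t$ inside a smooth proper arc $\alpha$, make $\Sigma$ transverse to $\alpha$ so that $\Sigma\cap\alpha$ is finite, and then slide those finitely many intersection points off the nowhere-dense set $\Lambda_t\subset\alpha$.
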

\begin{proof}
Suppose that $f:N_t\setminus\Lambda_t\to N_s\setminus\Lambda_s$ is a diffeomorphism between neighborhoods of the ends of $\mathring{\mathbf{K}}_t$ and $\mathring{\mathbf{K}}_s$ respectively in the punctured manifolds. Without loss of generality we can assume that $\Lambda_t\subset N_t$. 

By the assumption of tameness, there  exists a smooth properly embedded path $p:[0,\infty)\to \mathring{\mathbf{K}}_t$ whose image contains $\Lambda_t$. Let $U$ be an open neighborhood of the image of $p$ such that $\overline{U}$ is diffeomorphic to a closed half hyperplane of $\R^4$. Let $V$ be the connected component of $\mathring{\mathbf{K}}_s\setminus f(\partial U)$ that contains $\Lambda_s$.

Of course, $N_t$ is diffeomorphic to $N_t\setminus\overline{U}$, and this is diffeomorphic via $f$ to $N_s\setminus \overline{V}$. But this contradicts Corollary~\ref{c:no embedding2}.
\end{proof}

\begin{remark}
In the above proof, it is clear that $V$ is homeomorphic to $\R^4$, but it is unclear whether it is exotic or standard; this ultimately depends on the smooth Schoenflies problem in dimension $4$. This is the unique obstruction to show that two non-diffeomorphic smooth $4$-manifolds $M$ and $N$ are non-diffeomorphic if and only if $M\setminus\Lambda$ and $N\setminus\Lambda$ are also non-diffeomorphic for any closed, totally disconnected, tame subset $\Lambda$. Previous Lemma can be seen as a version of this question for Taubes-like exotica.
\end{remark}

\begin{lemma}\label{l:nonisolated smoothings2}
Let $\mathbf{R}$ be a Taubes-like exotic $\R^4$. Let $\Lambda$ be any tame totally disconnected closed subset accumulating on the end of $\mathbf{R}$. Then the end of $\mathbf{R}\setminus\Lambda$ corresponding to the end of $\mathbf{R}$ is smoothly nonperiodic.
\end{lemma}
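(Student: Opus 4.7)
The plan is to argue by contradiction, reducing to Remark~\ref{r:no embedding2}. I will suppose the end of $\mathbf{R}\setminus\Lambda$ corresponding to $\mathbf{e}$ is smoothly periodic, so that Definition~\ref{d:periodic_end} furnishes an open neighborhood $V\subset\mathbf{R}\setminus\Lambda$ and a diffeomorphism $h:V\to h(V)\subset V$ whose iterates $\{h^n(V)\}_{n\in\N}$ form a neighborhood basis of the end. Replacing $h$ by $h^2$ if necessary, I assume $h$ is orientation-preserving. The strategy is to exhibit a standard bicollared $3$-sphere $\Sigma$ with tubular neighborhood $U_0$ sitting in $V$ at a fixed shallow cylindrical level, push it deep into the end by a large iterate $h^n$, and then observe that $h^{-n}$ gives a smooth orientation-preserving embedding of $h^n(U_0)$ back into a shallow cylinder $\mathbf{C}_s$---exactly what Remark~\ref{r:no embedding2} forbids.

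Fix an admissible $\psi$ for $\mathbf{R}$'s end and the constant $r_\psi$ of Proposition~\ref{p:heritage}. Using tameness of $\Lambda$ (which allows $\overline{\Lambda}\subset\overline{\mathbf{R}}$ to be covered by finitely many disjoint arbitrarily small balls), together with the closedness of the projection $\pi:S^3\times[0,\infty)\to[0,\infty)$, I can find $r>r_\psi$ and $\delta>0$ with $r-\delta>r_\psi$ such that the slab $(r-\delta,r+\delta)$ is disjoint from $(\pi\circ\psi)(\Lambda)$. Enlarging $r$ if necessary, I further arrange that $\Sigma:=\psi^{-1}(S^3\times\{r\})$ lies in $V$, which is automatic as soon as $r$ exceeds an $L_0$ for which $V\supset\psi^{-1}(S^3\times[L_0,\infty))\setminus\Lambda$. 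Then $U_0:=\psi^{-1}(S^3\times(r-\delta,r+\delta))\subset\mathbf{C}_{r+\delta}$ is an open tubular neighborhood of $\Sigma$ disjoint from $\Lambda$ and contained in $V$.

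Next, since $\{h^n(V)\}$ is a neighborhood basis of the end, for any sufficiently large $L>r+\delta$ there is $n$ with $h^n(V)\subset\psi^{-1}(S^3\times[L,\infty))\setminus\Lambda$; compactness of $\overline{U}_0$ then forces $h^n(\overline{U}_0)\subset\mathbf{C}_t$ for some $t>L$. Choose $s$ with $r+\delta<s<L$, so that $s>r_\psi$ and $h^n(\Sigma)\subset\mathbf{C}_t\setminus\mathbf{C}_s$ is a topologically bicollared $3$-sphere with tubular neighborhood $h^n(U_0)\subset\mathbf{C}_t$. Since $\Sigma$ separates $\mathbf{R}$ into a compact ``body'' side containing $\psi^{-1}(S^3\times\{0\})$ and an unbounded ``end'' side containing $\mathbf{e}$, and since $h^n$ is an orientation-preserving diffeomorphism of $V$, the sphere $h^n(\Sigma)$ also separates $\mathbf{R}$ in the same fashion; intersecting with $\mathbf{C}_t$ shows $h^n(\Sigma)$ separates $\mathbf{C}_t$ into two components, each containing one of the boundary spheres $\psi^{-1}(S^3\times\{0\})$ and $\psi^{-1}(S^3\times\{t\})$, and hence is homologous to $\psi^{-1}(S^3\times\{0\})$ in $\mathbf{C}_t$. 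Remark~\ref{r:no embedding2} therefore forbids any orientation-preserving smooth embedding $h^n(U_0)\hookrightarrow\mathbf{C}_s$, while the composition of $h^{-n}|_{h^n(U_0)}:h^n(U_0)\to U_0$ with the inclusion $U_0\subset\mathbf{C}_{r+\delta}\subset\mathbf{C}_s$ is exactly such an embedding---the desired contradiction.

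The main technical hurdle is coordinating the parameters $r,\delta,s,t$ and the iterate $n$ so that the inclusions $\Sigma\subset V$, $U_0\cap\Lambda=\emptyset$, $h^n(V)\subset\psi^{-1}(S^3\times[L,\infty))\setminus\Lambda$, and the hypotheses of Remark~\ref{r:no embedding2} hold simultaneously; the tameness of $\Lambda$ is the essential geometric input, because it creates enough ``$\Lambda$-free'' cylindrical levels to slip the test cylinder $U_0$ into $V$ in the first place.
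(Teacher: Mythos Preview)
Your approach is valid but differs from the paper's, and one step needs tightening.

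The paper argues by directly building a periodic Taubes-like manifold: it takes the compact ``fundamental domain'' $C\subset\mathbf{R}$ between $\partial N_{\mathbf{e}}$ and $h(\partial N_{\mathbf{e}})$ and forms $(\mathbf{R}\setminus N_{\mathbf{e}})\cup_\partial C\cup_\partial C\cup_\partial\cdots$, which is Taubes-like by Lemma~\ref{l:extension} yet has a periodic end, contradicting Taubes' Theorem~\ref{t:Taubes}. You instead route the contradiction through the non-embedding result Remark~\ref{r:no embedding2}, pushing a $\Lambda$-free test annulus $U_0$ deep into the end by $h^n$ and pulling it back by $h^{-n}$ into a shallower $\mathbf{C}_s$. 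Both arguments ultimately rest on Taubes' theorem, but yours trades the explicit periodic-manifold construction for an appeal to an already-packaged corollary; the paper's route is shorter here precisely because it avoids having to verify any homology condition on the pushed sphere.

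The soft spot in your write-up is the sentence ``since $h^n$ is an orientation-preserving diffeomorphism of $V$, the sphere $h^n(\Sigma)$ also separates $\mathbf{R}$ in the same fashion.'' The map $h^n$ is only a diffeomorphism of $V$ onto $h^n(V)$, so it controls how $h^n(\Sigma)$ sits inside $h^n(V)$, not inside $\mathbf{R}$; a priori $h^n(\Sigma)$ could bound a topological ball $B\subset\psi^{-1}(S^3\times(L,\infty))$ missing $\mathbf{K}_0$, in which case $h^n(\Sigma)$ would be null-homologous in $\mathbf{C}_t$ and Remark~\ref{r:no embedding2} would not apply. The fix is short once you arrange $r-\delta>L_0$: then $\overline{V_+}:=\psi^{-1}(S^3\times[r,\infty))\setminus\Lambda$ is closed in $\mathbf{R}\setminus\Lambda$ and contained in the open set $V$, so $h^n(\overline{V_+})$ is closed in $\mathbf{R}\setminus\Lambda$, and hence $h^n(V_+)$ is an entire connected component of $(\mathbf{R}\setminus\Lambda)\setminus h^n(\Sigma)$, the one containing $\mathbf{e}$. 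Since $\mathbf{K}_0$ is disjoint from $h^n(V)\supset h^n(\Sigma)$, any point of $\mathbf{K}_0\setminus\Lambda$ lies in a different component; because $\Lambda$ is totally disconnected in a $4$-manifold, the same separation holds in $\mathbf{R}$ itself, forcing $\mathbf{K}_0\subset B$ and giving the desired homology class. With this clarification your argument goes through.
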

\begin{proof}
Suppose that $h:N_\mathbf{e}\setminus\Lambda\to N_\mathbf{e}\setminus\Lambda$ is a periodic embedding for the end $\mathbf{e}$, i.e., the sets $h^k(N_\mathbf{e}\setminus\Lambda_\mathbf{R})$ form a neighborhood base for $\mathbf{e}$. Since $\Lambda$ is tame, we can assume that $\Lambda \cap \partial N_\mathbf{e} = \emptyset$. Moreover, we can assume that $\partial N_\mathbf{e}$ is contained in $\psi^{-1}(S^3\times(r_\psi,\infty))$ for some admissible homeomorphism from a neighborhood of $\mathbf{e}$ to $[0,\infty)$.

Let $C$ be the  compact connected component of $\mathbf{R}$ bounded by $\partial N_\mathbf{e}$ and $h(\partial N_\mathbf{e})$. It follows that $$\mathbf{R}\setminus N_\mathbf{e}\cup_\partial C\cup_\partial C\cup_\partial\cdots$$ would be a periodic Taubes-like manifold by Lemma~\ref{l:extension}, which is not possible.
\end{proof}

\begin{remark}
As in Lemma~\ref{l:nonisolated smoothings}, the obstruction to extending Lemma~\ref{l:nonisolated smoothings2} to any nonperiodic exotic $\R^4$ is the smooth Schoenflies conjecture.
\end{remark}

\begin{definition}\label{d:nonisolated taubes-like}
A nonisolated end $\mathbf{e}$ of a noncompact smooth manifold is called {\em Taubes-like} if there exists a closed neighborhood $N_\mathbf{e}$ of that end and an embedding $i:N_\mathbf{e}\to Z$ for some $Z\in\MM$ with the following property:

If $\Sigma$ is a compact and connected submanifold of $N_\mathbf{e}$ such that it separates $\partial N_\mathbf{e}$ from $\mathbf{e}$ then $i(\Sigma)$ also separates the Taubes-like end of $Z$ from $\mathbf{K^\psi_0}$ for some admissible homeomorphism $\psi:X\to S^3\times[0,\infty)$, where $X$ is a neighborhood of the end of $Z$. 

If $Z\in\mathcal{R}$, then it will be said that the end is {\em Taylor-like}.
\end{definition}

\section{Exotic nonleaves}  

It is time to present the new examples of exotic nonleaves. They are appropriate smoothings of topological manifolds of the form $M\setminus\Lambda$ where $M$ is a closed simply connected topological $4$--manifold and $\Lambda$ is a tame totally disconnected compact subspace of $M$.

\subsection{$C^2$ exotic nonleaves}  

\begin{definition}[The family of $C^2$ nonleaves]\label{d:B(M,L)}
Suppose that $M$ is not homeomorphic to $S^4$. Let $\BB(M,\Lambda)$ be the smoothings of $M\setminus\Lambda$ such that at least one of its ends is not smoothly periodic and that end is diffeomorphic to at most finitely many other ends. 

In case $M$ is homeomorphic to $S^4$, a  smoothing $W$ of $S^4\setminus\Lambda$  belongs to $\BB(S^4,\Lambda)$ if and only
$W$ has a finite non-zero number of Taylor-like ends in $\MM_+$ (Definitions \ref{d:nonisolated taubes-like} and \ref{d:Taylor exotica}), and there exists an open set $A$ with the following properties:
\begin{itemize}
\item $A$ is a union of neighborhoods of every end which is not a Taylor like end in $\MM_+$.

\item Any compact subset $K\subset A$ can be smoothly embedded in a finite connected sum of $\C P^2$'s.
\end{itemize}
\end{definition}

\begin{remark}\label{r:B(M,L) nonempty}
It is easy to show that $\BB(M,\Lambda)$ is always nonempty. More precisely, in \cite[proposition 2.1] {Gompf}, Gompf shows that $M\setminus\{\ast\}$ always admits a Taubes-like end; then puncturing this smoothing by $\Lambda\setminus\{\ast\}$ gives an element in $\BB(M,\Lambda)$ for $M\neq S^4$. When $M=S^4$, just take an exotic $\R^4$ in $\mathcal{R}\cap \MM_+$ and delete $\Lambda\setminus\{\ast\}$. In these examples, the ends in $\Lambda\setminus\{\ast\}$ are all standard, so any compact set in a neighborhood of these ends can be smoothly embedded in $\R^4$ and therefore in $\C P^2$.
\end{remark}

\begin{corollary}\label{c:C2 uncountably smoothings}
The family $\BB(M,\Lambda)$ consists of uncountably many smoothings of $M\setminus\Lambda$.
\end{corollary}
\begin{proof}
The smoothings given in Remark~\ref{r:B(M,L) nonempty} have exactly one Taubes-like end and the other ones are standard. This Taubes-like end can be replaced by any other Taubes-like end in the $1$-parameter family of smoothings given by Taubes' theorem. When $M$ is homeomorphic to $S^4$ this $1$-parameter family is formed by Taylor-like smoothings by Remark~\ref{r:Taylor-like}. By Lemma~\ref{l:nonisolated smoothings} all these smoothings, parametrized by $t>r_\psi$, are pairwise non-diffeomorphic.
\end{proof}

The aim of this section is to prove the next Theorem, which extends the main result in \cite{Menino-Schweitzer}.

\begin{theorem}\label{t:C2 nonleaves}
Let $M$ be a  closed simply connected topological manifold and let $\Lambda$ be a nonempty, tame, closed and totally disconnected subset (finite or infinite). Then $\BB(M,\Lambda)$ consists of smooth manifolds that cannot be diffeomorphic to any leaf of any $C^2$ codimension one foliation on a compact manifold.
\end{theorem}

\begin{proposition}
Let $\FF$ be a codimension one $C^{1,0}$ foliation of a compact $5$--manifold $M$. If $\FF$ has a leaf $L$ diffeomorphic to some $W\in\BB(M,\Lambda)$, then $L$ is a proper leaf without holonomy. \label{p:proper}
\end{proposition}

\begin{proof}
Since $L$ is simply connected, it is a leaf without holonomy. Assume that $L$ is not proper.

First case: $M\neq S^4$. By Freedman's Theorem (see \cite{Freedman}), $H_2(M,\Z)$ is non-trivial. Moreover, since $M\setminus\Lambda$ is homeomorphic to $M\# (S^4\setminus\Lambda)$ there exists a compact set $C\subset W$ homeomorphic to $M\setminus D^4$ so that $H_2(C,\Z)=H_2(W,\Z)$ and $H_2(W\setminus C,\Z)=0$. By means of the Mayer-Vietoris exact sequence, it is easy to see that there is no compact subset in $L$ homeomorphic to $C$ and disjoint from it (the complete argument is analogous to that given in \cite[Proof of Proposition 2.14, First case]{Menino-Schweitzer}). Applying Reeb stability to $C$ there exists a product bifoliation $C\times (-1,1)$ embedded in $\FF$ such that $C\times\{0\}\hookrightarrow L$. If $L$ is non-proper then it accumulates on $C\times\{0\}$ transversely and therefore meets the embedded product foliation infinitely many times, so $L$ must contain infinitely many pairwise disjoint sets homeomorphic to $C$, but that is impossible.

Second case: $M=S^4$. Let $\mathbf{R}\in\mathcal{R}\cap\MM_+$ be a Taylor-like exotic $\R^4$ used to model some end $\mathbf{e}$ of $W$. Let $N_{\mathbf{e}}$ be a neighborhood of $\mathbf{e}$ and let $i:N_\mathbf{e}\to\psi^{-1}([0,\infty))$ be a smooth embedding satisfying the conditions of Definition~\ref{d:nonisolated taubes-like}. Let $\Sigma\subset N_\mathbf{e}$ be a topological bicollared $S^3$ such that $\psi(i(\Sigma))$ is homologous to $S^3\times\{0\}$ and $\Sigma\subset \psi^{-1}(S^3\times(r_\psi,\infty))$. Let $U$ be a small compact tubular neighborhood of $\Sigma$ such that $i(U)$ is still contained in $\psi^{-1}(S^3\times(r_\psi,t))$.

Applying Reeb stability to this annulus we get a product $C^{1,0}$ foliation $U\times(-1,1)$ embedded in $\FF$ so that $U\times\{0\}\subset L$ and each $U\times\{t\}$ is diffeomorphic to $U$ via the projection along leaves of the transverse foliation $\NN$. If $L$ is not proper then it must meet this product foliation infinitely many times. Thus $W$ contains infinitely many annuli $U_k$, $k\in\N$, diffeomorphic to $U$.

Since $\mathbf{R}\in\MM_+$, it follows by Lemma~\ref{p:no embedding} and Remark~\ref{r:no embedding2}, that $U_k$ cannot be smoothly embedded in the neighborhood $A$ of the ends that do not lie in $\mathcal{R}\cap\MM_+$, for otherwise it would embed in a finite connected sum of $\C P^2$'s.

Since the ends of $W$ in $\mathcal{R}\cap\MM_+$ are finite by hypothesis, passing to a subsequence if necessary, we can assume that the $U_k$'s converge to some fixed Taylor-like end $\mathbf{e}'$ in $\MM_+$. It follows that $L\subset\lim_{\mathbf{e}'}L$.

Let $\mathbf{E}\in\mathcal{R}\cap\MM_+$ be the exotic $\R^4$ in which a neighborhood of $\mathbf{e}'$ can be suitably embedded. Let $\varphi:X\to S^3\times [0,\infty)$ be a homeomorphism between a neighborhood of the end of $\mathbf{E}$ and a standard cylinder, and let $j:N\to X$ be a suitable smooth embedding from a neighborhood $N$ of $\mathbf{e}'$ into $X$.

It follows that the annuli $U_k$ are smoothly embedded in $\varphi^{-1}(S^3\times[0,t))$ and converge to its end. A priori, the $j(U_k)'s$ do not need to separate the ends of $\mathring{\mathbf{C}}_t^\varphi$. If infinitely many of the $j(U_k)$'s do not disconnect, it follows, by Lemma~\ref{l:taylor stimate}, that $\mathbf{C}_t^\varphi$ cannot be smoothly embedded in any spin closed $4$-manifold with hyperbolic intersection form but this contradicts the fact that the end is Taylor-like.

Therefore there exists an exotic compact cylinder $C$ in $\mathbf{C}^\varphi_t$ which is limited on its sides by two diffeomorphic copies of $U$, say $j(U_k)$ and $j(U_m)$, with the same relative orientation, so that $C$ is the union of $j(U_k)$, $j(U_m)$, and the connected component bounded by them in their complement. Let $K_\psi$ be the closure of the bounded connected component of $\mathbf{R}\setminus i(U)$. The cylinder $C$ can be used to produce a smoothly periodic cylindrical end in the Taubes-like manifold $K_\psi\cup_{\partial}C\cup_{h}C\cup_{h}\dots\,,$ where $h$ is just an  orientation-preserving diffeomorphism from $j(U_k)$ to $j(U_m)$ on the two sides of $C$. This is a Taubes-like manifold by Lemma~\ref{l:extension} with a periodic end, but that contradicts Taubes' Theorem~\ref{t:Taubes}.
\end{proof}

\begin{proof}[Proof of Theorem~\ref{t:C2 nonleaves}] If $W\in \BB(M,\Lambda)$ then, by hypothesis it has a nonperiodic end that is diffeomorphic to at most finitely many other ends of $W$. Then Proposition~\ref{p:proper} shows that a leaf diffeomorphic to $W$ must be proper without holonomy and Dippolito's criterion for a product neighborhood (Proposition~\ref{p:product_neighborhood} and Remark~\ref{r:rigidity reduction}) shows that $W$ is $1$-rigid. Then, $W$ satisfies the hypothesis of Theorem~\ref{t:C2 nonleaf criterion} and Remark~\ref{r:double cover}, therefore it is a $C^2$-nonleaf.
\end{proof}

\begin{remark}[Stein nonleaves]
An interesting corollary is the following result. It is shown in \cite[Chapter 11]{Gompf2} that there exist uncountably many Stein structures on $\C P^2\setminus\{\ast\}$; thus a cardinality argument like the one in Corollary~\ref{c:good small family} shows that almost all of them are nonperiodic, and therefore they form an uncountable subset of $\BB(\C P^2,\ast)$. The same argument applies to a larger family of examples. As a consequence of Theorem~\ref{t:C2 nonleaf criterion}, there exist uncountably many Stein manifolds which are not diffeomorphic to leaves of codimension one $C^2$ foliations.

It is interesting to note that $4$-dimensional Stein manifolds cannot have more than one end, since they are homeomorphic to the interior of a (possibly infinite) handlebody with no $3$- or $4$-handles \cite[Theorem 11.2.6]{Gompf2}.
\end{remark}

\subsection{$C^{1,0}$ exotic nonleaves}  

In this section the puncture set $\Lambda$ will always be infinite. Our examples of $C^{1,0}$ nonleaves will be subfamilies,  still uncountable, of the family $\BB(M,\Lambda)$ defined in the previous subsection. Recall that by Proposition~\ref{p:proper} we know that all the elements in our families are $1$-rigid.
We shall use a procedure to construct smooth manifolds, the infinite connected sum, which we now define.

\begin{notation}[Infinite connected sum]
Let $V$ be an oriented noncompact smooth manifold, let $T\subset V$ be a closed discrete subset, and let $\Psi: T\to \Diff(n)$ be a map to the family of $n$--dimensional oriented smooth connected manifolds, where $n=\dim V$. Let $\{D_t\mid\ t\in T\}$ be a family of pairwise disjoint standard closed disks in $V$ such that $t\in D_t$ and the union $\bigcup_t D_t$ is closed, and let $D_{\Psi(t)}$ be an arbitrary standard closed disk in $\Psi(t)$. The manifold $V\#\Psi$ is defined to be the boundary union of $V\setminus\bigcup_t \mathring{D_t}$ with $\bigsqcup_t\Psi(t)\setminus\mathring{D}_{\Psi(t)}$ by identifying the boundary components relative to each $t\in T$ so as to respect the orientations.
	
	If $V$ has boundary, then $\bigcup_t D_t$ is assumed to be contained in the interior of $M$. Observe that the diffeomorphism class of $M\#\Psi$ does not depend on the choice of the standard disks. \label{n:D_s}
\end{notation}

\begin{definition}[The family of $C^{1,0}$ nonleaves]\label{d:EE} 
First, suppose that $M$ is not homeomorphic to $S^4$. Let $S$ be an infinite and discrete subset of $\Lambda$. Then $\mathcal{L}(M,\Lambda)$ is the family of smoothings of $M\setminus\Lambda$ that are at most $k$--to--one at $S$ and such that $S$ consists of smoothly nonperiodic ends. 

In the other case, when $M$ is homeomorphic to $S^4$, a smooth manifold
 $W$ homeomorphic to $M\setminus \Lambda$ belongs to
 $\mathcal{L}(S^4,\Lambda)$ if and only if there exist subsets $F$ and $S$ of $\Lambda$ and an open set $A\subset W$ satisfying the following conditions: 
\begin{itemize}
\item  $F\cap\overline{S}=\emptyset$, $F$ is finite, $S$ is discrete and infinite,

\item The ends corresponding to $F$ are Taylor-like and belong to $\MM_+$,

\item $A$ is a union of neighborhoods of every end which does not lie in $\mathcal{R}\cap\MM_+$.

\item Any compact subset $K\subset A$ can be embedded in a finite connected sum of $\C P^2$'s.
 
\item $W$ is at most $k$--to--one at $S\cup F$, every end in $S$ is smoothly nonperiodic, and 
every end in $\Lambda\setminus F$ admits a neighborhood that can be embedded in $S^4$.
\end{itemize} 
\end{definition}
\begin{figure}\label{f:nonleaf construction}
\centering
\includegraphics[scale=0.24]{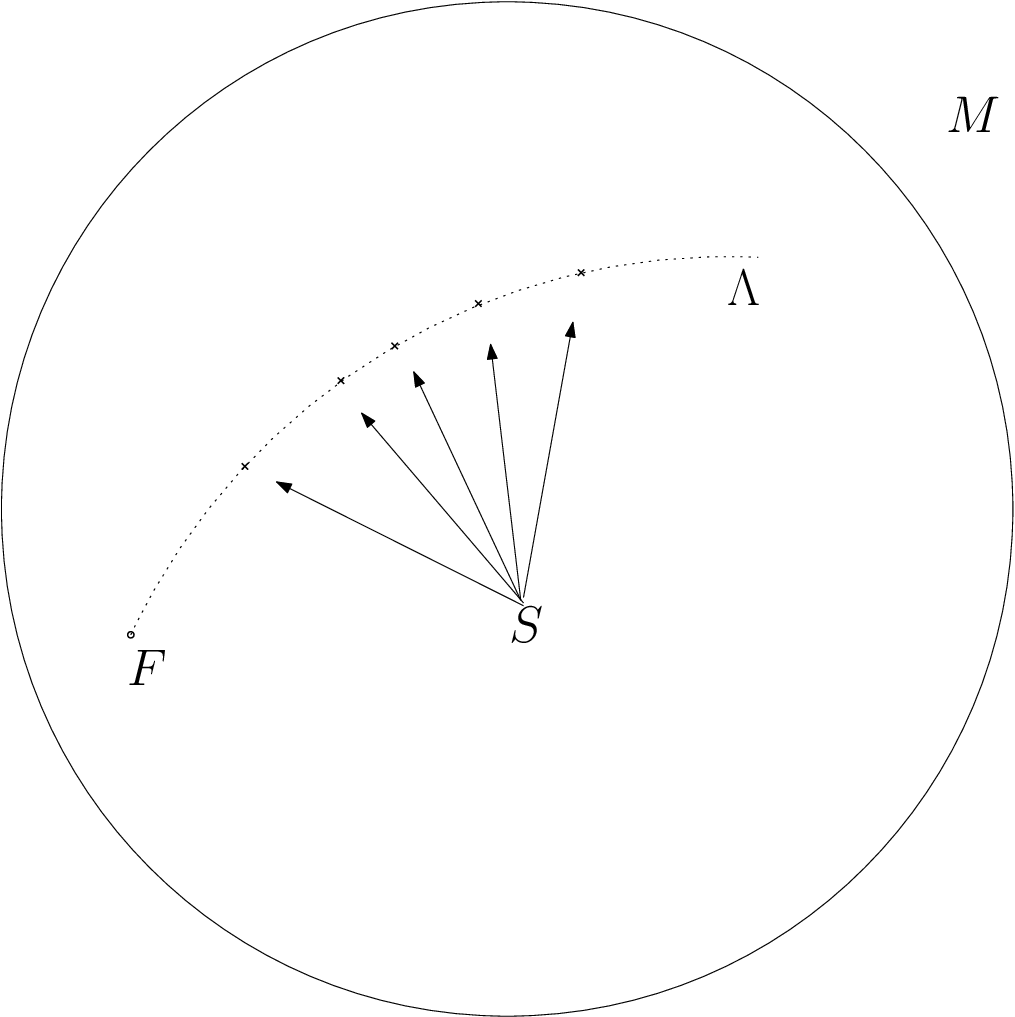}
\includegraphics[scale=0.24]{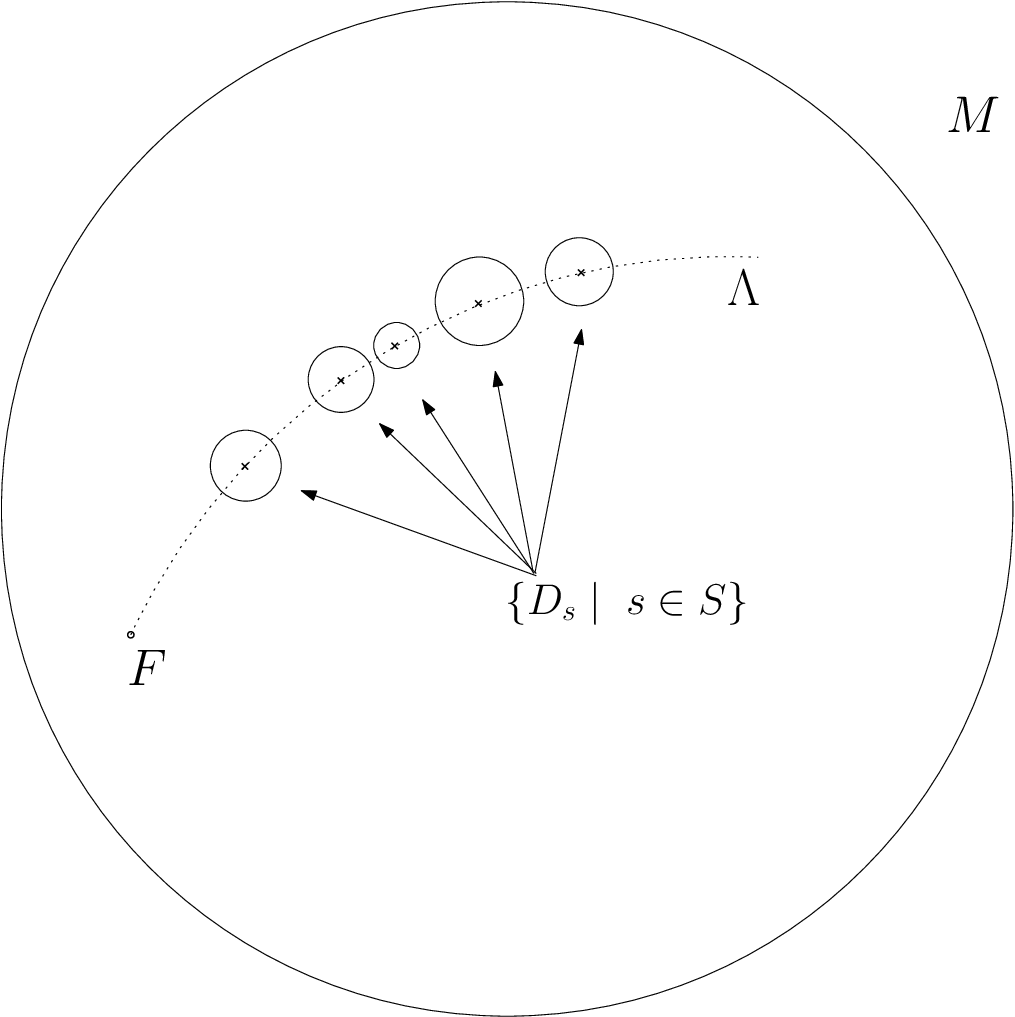}
\includegraphics[scale=0.24]{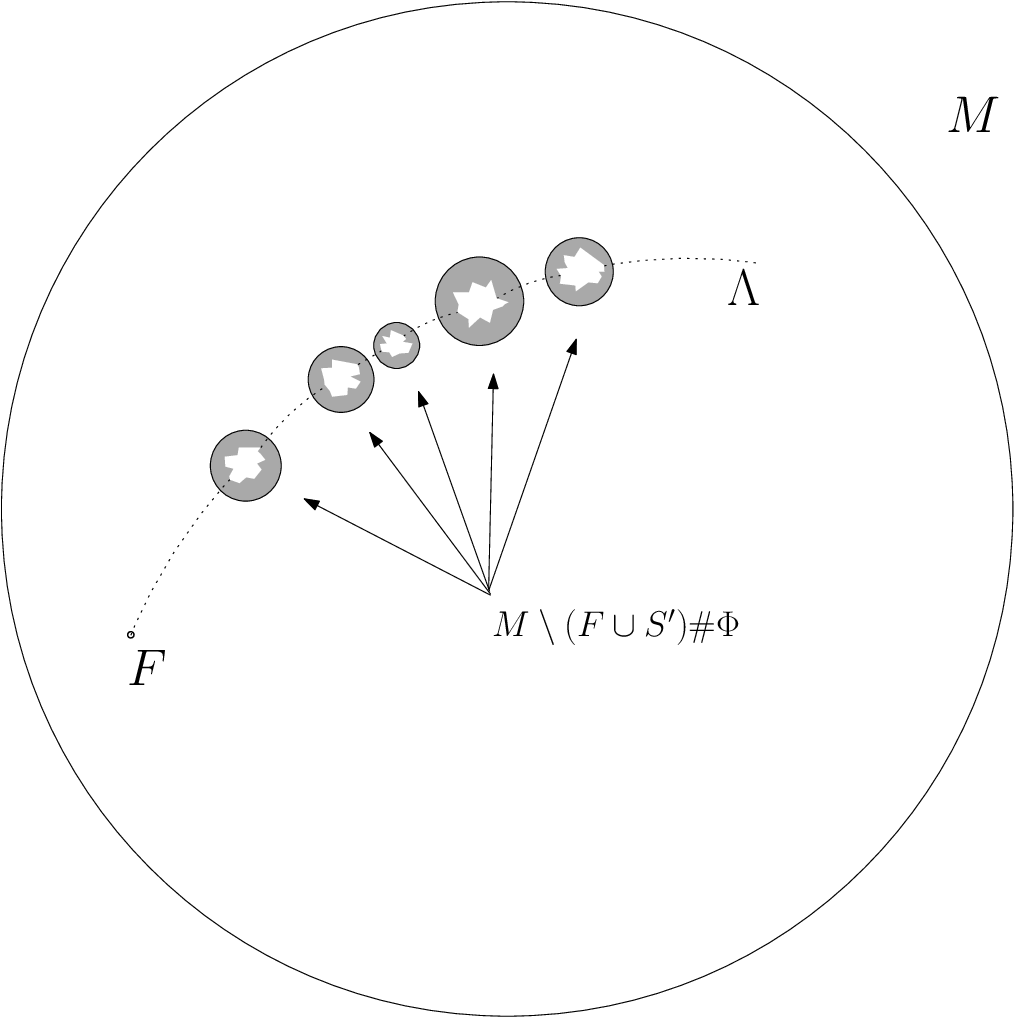}
\caption{Constructing manifolds in $\mathcal{L}(M,\LB)$; grey regions represent pairwise nondiffeomorphic Taubes-like ends (possibly non isolated) that can be embedded in $\C P^2$.}
\end{figure}

\begin{remark}\label{r:L(M,L) nonempty}
Is not difficult to show that $\mathcal{L}(M,\Lambda)$ is nonempty for every possible choice of an infinite, tame, closed and totally disconnected subset $\Lambda\subset M$. For instance, first take $F=\{\ast\}\subset \Lambda$ and take a Taubes-like smoothing of $M\setminus F$ (if $M=S^4$ take the smoothing to be in $\mathcal{R}\cap\MM_+$) as in Remark~\ref{r:B(M,L) nonempty}. Let $S\subset \Lambda$ be an arbitrary infinite and discrete set with derived set $S'$ and choose $S$ so that $\# S'=1$. Let $V$ be the smooth structure on $M\setminus (F\cup S')$ induced by the smoothing of $M\setminus F$. 

Choose $\mathbf{R}\in\mathcal{S}\cap\MM_-$ such that $\mathbf{R}$ can be smoothly embedded in $\C P^2$ (See Remark~\ref{r:no S4 - CP2 maybe}). Let $\psi$ be an admissible homeomorphism from a neighborhood of the end of $\mathbf{R}$ to $S^3\times [0,\infty)$. Let $\Phi:S\to \{\mathring{\mathbf{K}}^\psi_t\mid\ t>r_\psi\}$ be an injective map from $S$ into the $1$-parameter family of pairwise not end--diffeomorphic Taubes-like exotica associated to $\mathbf{R}$ and $\psi$. Set $\Lambda_1=\Lambda\setminus (F\cup \overline{S})$. It readily follows that $(V\#\Phi)\setminus\Lambda_1$ is at most one--to--one at $F\cup S$ by Lemma~\ref{l:nonisolated smoothings}. The ends corresponding to $F\cup S$ are smoothly nonperiodic by \ref{l:nonisolated smoothings2} (recall that these ends cannot be diffeomorphic to ends outside of $F\cup\overline{S}$ since the latter are standard and can be embedded in $S^4$) and they also cannot be diffeomorphic to the end in $S'$ since this end is limit of Taubes-like ends, a property that the ends in $F\cup S$ do not satisfy.

Let us check that this manifold belongs to $\mathcal{L}(M,\Lambda)$. This is trivial when $M\neq S^4$. For $M=S^4$, observe that all the ends in $S$ admit neighborhoods that can be smoothly embedded in $\C P^2$ (by the choice of $\mathbf{R}$). The ends outside of $F\cup\overline{S}$ are standard so they admit neighborhoods that can be smoothly embedded in $S^4$ and therefore in $\C P^2$. The end corresponding to $S'$ admits a neighborhood that can be smoothly embedded in the noncompact manifold $\C P^2\#\C P^2\#\C P^2\#\cdots$, so any compact set in this neighborhood can be embedded in a finite connected sum of $\C P^2$ as desired.

As in Corollary~\ref{c:C2 uncountably smoothings}, it can be shown that every $\mathcal{L}(M,\Lambda)$ consists of uncountably many distinct smoothings.
\end{remark}

\begin{proof}[Proof of Theorem~\ref{Thm1}]
All the elements in $\LL(M,\Lambda)$ are $1$-rigid and each element is at most $k$--to--one at a discrete and infinite subset of smoothly nonperiodic ends, for some $k\in\N$. Thus Theorem~\ref{t:nonleaf criterion} and Remark~\ref{r:double cover} implies that all these elements are $C^{1,0}$-nonleaves.
Then Remark~\ref{r:L(M,L) nonempty} shows that there exists a continuum of distinct smoothings in $M\setminus\Lambda$ which are $C^{1,0}$ nonleaves.
\end{proof}

\begin{remark}
Of course if $W\in\BB(S^4,\Lambda)$ then $\overline{W}\notin \BB(S^4,\Lambda)$ by construction but is still a $C^2$-nonleaf (or a $C^{1,0}$-nonleaf if $W\in\LL(S^4,\Lambda)$).

It is possible to obtain more topological types of nonleaves by extending the definition of Taubes-like ends. In this article we only consider cylindrical ends (or infinite connected sums of cylindrical ends), but the same arguments would apply to the case of (simply connected) admissible ends considered in \cite[Definition 1.3]{Taubes}. We avoid this technical generalization for the sake of readability.
\end{remark}

\section{Topological realization}\label{s:realization}  

It is unclear whether all the manifolds in our families of non-leaves considered in the previous section are {\em homeomorphic} to leaves of $C^{1,0}$ foliations, but it is clear that many of them can be so realized. An important concept for this section is that of a {\em properly proper leaf} (defined in \cite{Cantwell-Conlon4}); this just means that the proper leaf admits a closed transversal with just one intersection point.

Assume that the closed, totally disconnected set $\Lambda\subset M$ is tame in the closed $C^0$ $4$-manifold $M$
and has {\em finite type}, i.e., $\Lambda^{(k)}$ is finite and nonempty for some $k\in\N$ where $\Lambda^{(k)}$ is the $k$th derived set of $\Lambda$; this $k$ is called the {\em type} of $\Lambda$. In \cite{Cantwell-Conlon4} it is shown that any connected, oriented, noncompact surface whose end set has finite type can be realized as a leaf of a $C^\infty$ foliation in a closed $3$-manifold. If $\#\Lambda^{(k)}>1$ then we can assume that these leaves are properly proper. If $\#\Lambda^{(k)}=1$ then this latter requirement can be obtained if the regularity drops to $C^1$. 

The techniques used for that result in the specific case of $S^2\setminus\Lambda$ can be applied with obvious modifications to $S^4\setminus\Lambda$, for a tame $\Lambda$.

In \cite[Corollary 2.3]{Cantwell-Conlon4} it is shown that surfaces of finite type can be decomposed as finite connected sums of canonical noncompact blocks called $N_{r,i}$ and a compact surface $N_\ast$. If the surface is of the form $S^2\setminus \Lambda$ then $N_\ast = S^2$ and only blocks of the form $N_{r,0}$ are involved in the decomposition (the coordinate $i$ in $N_{r,i}$ refers to ends accumulated by genus). These canonical blocks $N_{r,0}$ are also spheres punctured along tame subsets and have analogous counterparts in any dimension, so $S^4\setminus\Lambda$ can be also be decomposed into canonical blocks, provided that $\Lambda$ is tame. 

Now each canonical block is realized as a properly proper leaf of a codimension one foliation on a closed manifold. Once this is done, trivial surgeries can be applied to realize the original manifold $M\setminus \Lambda$ as a leaf. Observe that we are not trying to control the ambient topology, so we can relax some of the definitions in \cite{Cantwell-Conlon4}; for instance, in our context, we change the notion of {\em availability} to {\em being realizable as a properly proper leaf}. The fundamental foliations given in \cite[Propositions 4.3 and 4.8]{Cantwell-Conlon4}, have immediate $4$-dimensional counterparts. With this in mind we get the following result.

\begin{proposition}\label{p:realization}
Let $\Lambda$ be a closed, totally disconnected, tame subset of $S^4$ with finite type $k$. Then $S^4\setminus\Lambda$ is diffeomorphic to a leaf of a $C^\infty$ codimension one foliation of a smooth closed $5$-manifold (depending on $\Lambda$). If $\#\Lambda^{(k)}>1$ then the leaf can be assumed to be proper with a closed transversal meeting the leaf in just one point. If $\#\Lambda^{(k)}=1$ then it can be realized as a proper leaf admitting such a closed transversal in a $C^{1}$ foliation. \end{proposition}

\begin{definition}
The set $\Lambda$ is said to be of {\em Cantor finite type} $k$ if $\Lambda^{(k)}$ is a Cantor set and $\Lambda^{(k-1)}$ is not a Cantor set. Set $\Upsilon=\Lambda\setminus \Lambda^{(k)}$ and let $e\in\Upsilon$; then there exists a maximum $r\in\N$, which we denote  by $t(e)$, such that $e\in\Lambda^{(r)}\setminus \Lambda^{(r+1)}$. If $e\in \Lambda^{(k)}$, $t(e)$ is defined as the maximum $r$ such that there exists a sequence $e_n\in\Upsilon$, $n\in\N$, converging to $e$ with $t(e_n)=r-1$, and $t(e)=0$ if no such sequence exists. The function $t:\Lambda\to\N$ will be called the {\em type function}, and  its restriction $t_0:\Lambda^{(k)}\to\N$ will be called the {\em Cantor type function}.

The set $\Lambda$ is said to have \em continuous Cantor finite type if it has Cantor finite type and the Cantor type function is continuous.
\end{definition}

\begin{lemma}\label{l:decomposition}
If $\Lambda$ has continuous Cantor finite type then there exists a finite clopen partition $\Lambda=\Lambda_0\sqcup\Lambda_1\sqcup\cdots\sqcup\Lambda_r$ where $\Lambda_0$ has finite type and $\Lambda_i$, $0<i\leq r$, has Cantor finite type with a constant Cantor type function.
\end{lemma}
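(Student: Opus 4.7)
The plan is to use continuity of $t_0$ to split the Cantor set $\Lambda^{(k)}$ into finitely many clopen pieces of constant Cantor type, and then extend each piece to a carefully sized clopen neighborhood in $\Lambda$.

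First, since $t_0:\Lambda^{(k)}\to\mathbb{N}$ is continuous with discrete target and $\Lambda^{(k)}$ is compact, its image is a finite set $\{n_1,\dots,n_r\}$, so the level sets $A_i=t_0^{-1}(n_i)$ form a clopen partition of $\Lambda^{(k)}$. The core step is to show that $A_i$ is topologically separated in $\Lambda$ from $\Lambda^{(n_i)}\setminus A_i$. This complement splits into the sets $A_j$ with $j\neq i$ (compact and disjoint from the compact $A_i$, hence separated) and the set of points $q\in\Upsilon$ with $t(q)\geq n_i$; a sequence of such $q$'s converging to $e\in A_i$ would, after passing to a subsequence of constant type $s\geq n_i$, force $t_0(e)\geq s+1>n_i$, contradicting $t_0(e)=n_i$. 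Hence $A_i$ admits an open neighborhood in $\Lambda$ disjoint from $\Lambda^{(n_i)}\setminus A_i$. Using that $\Lambda$ is compact, totally disconnected, and metrizable, I would refine these to pairwise disjoint clopen neighborhoods $\Lambda_i\supset A_i$ satisfying $\Lambda_i\cap\Lambda^{(n_i)}=A_i$, and set $\Lambda_0=\Lambda\setminus\bigsqcup_{i\geq 1}\Lambda_i$.

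Next I would verify the three required properties. The set $\Lambda_0$ is clopen and disjoint from $\Lambda^{(k)}$, so $\Lambda_0^{(k)}=\emptyset$ and $\Lambda_0$ has finite type. When $n_i=0$, no sequence in $\Upsilon$ accumulates on $A_i$, so $A_i$ is clopen in $\Lambda$ and one takes $\Lambda_i=A_i$, a Cantor set of Cantor type $0$ with trivially constant Cantor type function. When $n_i\geq 1$, one has $\Lambda_i^{(n_i)}=\Lambda_i\cap\Lambda^{(n_i)}=A_i$, a Cantor set; and since $t_0(e)=n_i$ forces every neighborhood of $e\in A_i$ to contain type-$(n_i-1)$ points, such points appear in $\Lambda_i$. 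Any type-$r$ point is isolated in $\Lambda^{(r)}$ (that is the very definition of being type $r$), so these type-$(n_i-1)$ points are isolated in $\Lambda_i^{(n_i-1)}$, showing that $\Lambda_i^{(n_i-1)}$ is not a Cantor set and that $\Lambda_i$ has Cantor finite type exactly $n_i$. The same type-$(n_i-1)$ points now lie in $\Upsilon_{\Lambda_i}=\Lambda_i\setminus A_i$ with $t_{\Lambda_i}=n_i-1$, and their accumulation on each $e\in A_i$ gives $t_{0,\Lambda_i}(e)=n_i$, which is constant as required.

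I expect the main obstacle to be the separation step: one must convert the pointwise, a priori sequence-based condition $t_0(e)=n_i$ into a uniform open separation of $A_i$ from all points of higher type in $\Upsilon$. The continuity of $t_0$ together with compactness of $A_i$ bridges this gap, but care is needed so that the clopen neighborhoods $\Lambda_i$ are simultaneously small enough to exclude types $\geq n_i$ in $\Upsilon$, large enough to contain $A_i$, and mutually disjoint so that the complement $\Lambda_0$ is well-defined and clopen.
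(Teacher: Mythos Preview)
Your argument is correct and follows the same approach as the paper: split $\Lambda^{(k)}$ into the finitely many level sets of the continuous map $t_0$, thicken each to a suitably small clopen neighborhood in $\Lambda$, and take $\Lambda_0$ to be what remains. The paper is terser (it simply asserts, ``by continuity of the Cantor type function'', the existence of pairwise disjoint clopen $U_i\supset t_0^{-1}(i)$ with $\max t(U_i)=i$ and omits the verification), whereas you spell out the separation step and the check that each $\Lambda_i$ has Cantor finite type with constant Cantor type function.
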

\begin{proof}
Since $\Lambda$ has Cantor finite type, $\max t(\Lambda)\leq k$ for some $k\in\N$. Set $F_i=t_0^{-1}(\{i\})$, this forms a clopen partition of the Cantor set $\Lambda^{(k)}$. By continuity of the Cantor type function there exists a family of pairwise disjoint clopen neighborhoods $U_i$, $0\leq i\leq k$, of the $F_i$'s so that $\max t(U_i)= i$. It follows that $\Lambda_0=\Lambda\setminus\bigcup_i U_i$ has finite type. The family $\Lambda_0$ and the $U_i$'s form the desired partition of $\Lambda$.
\end{proof}

\begin{remark}
In the previous decomposition $\Lambda_0$ can be chosen as the empty set if and only if $\max t(\Lambda_0)\leq \max t(\Lambda^{(k)})$. Therefore, we shall always assume that $\max t(\Lambda_0)>\max t(\Lambda^{(k)})$ if $\Lambda_0\neq\emptyset$.

If $\Lambda$ has finite type then we shall also set $\Lambda_0=\Lambda$.
\end{remark}

\begin{proposition}\label{p:realization2}
Let $M$ be a smooth simply connected closed $4$-manifold. Let $\Lambda$ be a closed, totally disconnected, tame subset of $M$ with finite type $k-1$ or continuous Cantor finite type $k$. Then $M\setminus\Lambda$ is diffeomorphic to a leaf of a $C^\infty$ codimension one foliation of a compact $5$-manifold (depending on $\Lambda$). If $\#\Lambda_0^{(k-1)}>1$ then the leaf can be assumed to be properly proper. If $\#\Lambda_0^{(k-1)}=1$ then it can be realized as a proper leaf admitting such a closed transversal in a $C^{1}$ foliation. \end{proposition}
\begin{proof}
Observe that it suffices to consider the case $M=S^4$: assume that $S^4\setminus \Lambda$ can be realized as a properly proper leaf of some foliation $\FF$. Let $\tau$ be a closed transversal meeting that leaf in one point and remove a tubular neighborhood of that transversal. Perform a transverse gluing with the product foliation in $(M\setminus \mathring{D}^4)\times S^1$. The resulting foliation contains a properly proper leaf diffeomorphic to $M\setminus \Lambda$, as desired. 

Let $\Lambda=\Lambda_0\sqcup \Lambda_1\sqcup\dots\sqcup \Lambda_r$ be the clopen decomposition of $\Lambda$ given by Lemma~\ref{l:decomposition} (if $\Lambda$ has finite type then $\Lambda=\Lambda_0$). We shall show that the proposition works for each element of the decomposition.

For $\Lambda_0$ this is just given by Proposition~\ref{p:realization}. 

Assume now that $\Lambda$ has a constant Cantor type function. it follows that $t(\Lambda^{(k)})=\{k\}$. 

The construction is given by induction. Assume first that $k=0$, so $\Lambda$ is just a Cantor set. Consider a suspension of two hyperbolic Morse--Smale (analytic) diffeomorphisms of the circle generating a free group of rank $2$ over $2\#S^3\times S^1$ with a transverse Cantor minimal set. Every leaf meeting the gap of this Cantor set has the topology of $S^4\setminus\Lambda$. Hector's Lemma (see e.g. \cite{Cantwell-Conlon6} for its foliated version) shows that the holonomy on each gap is cyclic, therefore these leaves are proper. Moreover, by \cite[Lemma 3.3.7]{Candel-Conlon}, these are properly proper leaves as desired. Let us denote this foliation by $\FF_0$.

For $k=1$, let $\sigma_-$ and $\sigma_+$ be fibers of the previous suspension. These are closed transversals meeting every leaf infinitely many times. Moreover $(\sigma_-\cup\sigma_+)\cap L$ is a set of points accumulating on every end of $L$. Let $\FF_1$ be the foliation obtained from $\FF_0$ by turbulization of these fibers, removing the interiors of the resulting (generalized) Reeb components and indentifying the compact boundary components so that the transverse orientation is preserved. The endset of the leaves meeting any gap of the exceptional minimal set is homeomorphic to $\Lambda$ and are still properly proper as desired; moreover the compact leaf diffeomorphic to $S^3\times S^1$ is also properly proper.

For $k>1$, assume that a transversely oriented $\FF_{k-1}$ was constructed with just one properly proper compact leaf diffeomorphic to $S^3\times S^1$, with an exceptional minimal set so that the leaves meeting gaps are properly proper and homeomorphic to $S^4\setminus \Lambda'$. Let $\tau_-$ and $\tau_+$ be two disjoint closed transversals meeting the compact leaf in just one point and repeat the same process given for $k=1$. The resulting foliation $\FF_k$ contains a properly proper leaf homeomorphic to $S^4\setminus\Lambda$. This construction always produces $C^\infty$ foliations.

The general case is obtained as a combination of the previous ones. Let $\FF_{\Lambda_i}$, $0\leq i\leq r$, be a foliation having properly proper leaves homeomorphic to $S^4\setminus\Lambda_i$. Let $\sigma_0$ and $\sigma_1$ be closed transversals of $\FF_{\Lambda_0}$ and $\FF_{\Lambda_1}$ meeting a prescribed properly proper leaf in just one point. Remove a tubular neighborhood of these closed transversals and perform a transverse gluing in such a way that the circles bounding the prescribed leaves are identifyed. It follows that the resulting foliation contains a properly proper leaf diffeomorphic to $S^4\setminus \Lambda_0\# S^4\setminus\Lambda_1=S^4\setminus (\Lambda_0\cup\Lambda_1)$.

By finite induction we get the desired result. Observe that the regularity obstructions only depend on $\Lambda_0$, by Proposition~\ref{p:realization}.
\end{proof}

\begin{remark}\label{r:nonsmoothable case}
If $M$ is not smoothable, the above constructions work in the $C^0$ category, i.e. we can always obtain $C^0$ foliations with a properly proper leaf homeomorphic to $M\setminus \Lambda$. If $M$ is not smoothable with definite intersection form and $W$ is any smoothing of $M\setminus\Lambda$ then no open neighborhood of $\Lambda$ in $W$ can be smoothly embedded in $S^4$. Thus, it is not clear at all whether some of the previous $C^0$ foliations can be smoothed to $C^{1,0}$ ones. (Observe that our examples of $C^{1,0}$ non-leaves are just a subfamily of the possible smoothings of $M\setminus\Lambda$.)
\end{remark}

In the case where $M\neq S^4$, $H_2(M\setminus\Lambda,\Z)$ is nontrivial  (by Freedman's work) and finitely generated; thus, as we have seen in Proposition~\ref{p:proper}, $M\setminus\Lambda$ is $1$-rigid. If a smoothing of $M\setminus\Lambda$ is diffeomorphic to a leaf of a $C^2$ codimension one foliation, then the theory of levels \cite{Cantwell-Conlon} implies that it must lie at a finite level. Thus only end sets with finite type or Cantor finite type can occur in this case. However, it is not clear to us whether the Cantor type function must always be continuous. This leads to the following conjecture.

\begin{conjecture}
Let $L$ be a proper leaf of a codimension one $C^2$ foliation on a compact manifold. If $\xi(L)$ is not countable then it has Cantor finite and continuous type. 
\end{conjecture}

The combination of Theorem~\ref{Thm1} and Proposition~\ref{p:realization} gives a large family of exotic nonleaves, i.e., those which are homeomorphic but not diffeomorphic to leaves. As far as we know these are the first such examples.

When $\Lambda$ has neither finite nor continuous Cantor finite type and $M=S^4$ there is some hope of realizing $M\setminus \Lambda$ at the infinite level as a limit of leaves at finite level, in analogy with the main result of \cite{Cantwell-Conlon}. Unfortunately, the techniques of \cite{Cantwell-Conlon} cannot be adapted easily to handle the topology of $S^4\setminus\Lambda$, so this is also left as an open question. It seems likely that $M\setminus\Lambda$ can be homeomorphic to leaves of $C^{0}$ foliations (for general $M$ and $\Lambda$), but this would need a better understanding of $C^0$ codimension one foliations.

Finally, we want to explain why regularity obstructions appear in the cases where $\#\Lambda_0^{(k)}=1$. The $2$-dimensional version of the next result seems to be close to classical work about the topology of leaves of foliations in closed $3$-manifolds, see \cite[Theorem 3]{Cantwell-Conlon5} and \cite[Theorems 1 and 2]{Cantwell-Conlon4}. It also seems to be a corollary of level theory and some generalizations of Novikov's theorem, but we have not found any explicit reference, and therefore we have decided to include a proof.

\begin{proposition}
Let $(M,\FF)$ be a codimension one $C^2$ foliation on a closed $n$--manifold $M$, $n\geq 3$. If $L$ is a proper leaf and there exists a transverse circle to $\FF$ which meets $\lim L$, then $L$ is not homeomorphic to $\R^{n-1}$. Moreover, the only possible realization of $\R^{n-1}$ as a proper leaf is in a generalized Reeb component. \label{p:Novikov-Duminy-Kopell}
\end{proposition}
\begin{proof}
Assume that there exists a proper leaf $L$ homeomorphic to $\R^{n-1}$. Without loss of generality, assume that $\FF$ is transversely oriented. 

By Corollary ~\ref{c:end1connected} there exists a product neighborhood of $L$ foliated as a product. Let $\Omega$ be the connected component of the saturated open set formed by proper leaves homeomorphic to $L$ which contains $L$. This open set consists of proper leaves without holonomy, since its leaves are proper and simply connected. 

If $\Omega$ fibers over $\R$ then the boundary leaves of  $\widehat{\Omega}$ are homeomorphic to $\R^{n-1}$ (by Hector's Trivialization Lemma). If this is the case then, by definition of $\Omega$, this boundary leaf cannot represent a proper leaf of $\FF$. Thus it must be semiproper, but in $C^2$ regularity semiproper leaves have infinitely many ends by Duminy's theorem (see e.g. \cite{Cantwell-Conlon3}), so it cannot be homeomorphic to $\R^{n-1}$. Thus $\Omega$ fibers over the circle.

If the completion $\widehat{\Omega}$ is not compact, then at least one of its boundary leaves is not compact and any Dippolito decomposition has an arm. Since $\Omega$ fibers over the circle, it follows that the monodromy map is nontrivial. Thus, by means of the Kopell Lemma for foliations (see \cite{Cantwell-Conlon2}), it follows that some leaf in the interior of $\Omega$ has infinitely many ends, as explicitly shown in \cite[Lemma 2.20]{Menino-Schweitzer}. But $L$ has just one end, so  $\widehat{\Omega}$ must be compact. Now $\Omega\neq M$ since a compact manifold cannot be foliated completely by noncompact proper leaves, so $\widehat\Omega$ has at least one boundary leaf, and all its boundary  leaves are compact.

Since the leaves of $\FF_{|\Omega}$ are proper leaves without holonomy the holonomy group of each compact boundary leaf must be cyclic. Thus the holonomy covering of each compact boundary leaf is an open manifold with two ends and each boundary leaf is contained in $\lim L$, which consists of just one compact leaf, say $L_0$, since distinct compact leaves in $\lim L$ would define different ends of $L$. A neighborhood of that end agrees with that of one end of the holonomy covering of the compact boundary leaf $L_0$.

The cyclic holonomy is represented by an element in $H^1(L_0,\Z)$ and thus its Poincar\'e dual can be represented by a codimension one closed submanifold $B_0\subset L_0$ which lifts to nearby leaves in $\Omega$. Let us denote these lifts by $B_t\subset L_t$, for $0\leq t<\varepsilon$, where $L_t$ is the leaf passing through the point $\gamma(t)$ where $\gamma$ is a transverse arc with initial point in $B_0\subset L_0$ and its other points inside $\Omega$. Then for $t>0$ $B_t$ is null--homologous and bounds a compact subset of $L_t$, since each leaf in $\Omega$ is a euclidean space, while $B_0$ does not bound on $L_0$. Thus hypothesis (2) of Theorem 2.13 ($B_t$ is null--homologous and bounds a compact subset of $L_t$ for $t>0$, but $B_0$ does not bound on $L_0$) of \cite{Schweitzer1} is satisfied and $L_0$ must bound a generalized Reeb component. Thus $\lim L=L_0$ does not admit a closed transversal, contrary to hypothesis.
\end{proof}

\begin{remark}\label{r:Novikov-Duminy-Kopell}
The above proof applies in a wider setting: if $W$ is a simply connected one-ended manifold so that for any compact set $K\subset W$ there exists $K'\supset K$ such that $\pi_1(W\setminus K,K'\setminus K,x)=0$ for any $x\in K$ then it is not homeomorphic to a proper leaf of a $C^2$ codimension one foliation on a compact manifold with a transverse circle meeting its limit set. For instance, this applies to any simply connected manifold that is simply connected at infinity.
\end{remark}

\begin{ack}The first author wants to thank the Fundaci\'on Pedro Barri\'e de la Maza, Postdoctoral fellowship (2013--2015), the
CAPES postdoc program, Brazil (INCTMat \& PNPD 2015--2016), the
Ministerio de Economía y Competitividad, Grant MTM2014-
56950--P, Spain (2015--2017), the Programa Cientista do Estado do Rio de Janeiro, FAPERJ, Brazil (2015--2018), CNPq research grant 310915/2019-8 \& MathAmSud: ``Rigidity and Geometric Structures in Dynamics'' 2019-2020 (CAPES-Brazil) for their support during this reasearch.
\end{ack}

\end{document}